\def\calH{\mathcal{H}}
\def\calQ{\mathcal{Q}}
\def\calT{\mathcal{T}}
\def\calV{\mathcal{V}}
\def\diam{\operatorname{diam}}
\def\FE{\text{FE}}
\def\IN{\mathbb{N}}
\def\IR{\mathbb{R}}
\def\BState{\State\hskip-\ALG@thistlm}
\newtheorem{mydef}{Definition}
\newtheorem{mythm}{Theorem}
\newtheorem{mylem}{Lemma}
\newcommand{\vertiii}[1]{{\left\vert\kern-0.25ex\left\vert\kern-0.25ex\left\vert #1 
                \right\vert\kern-0.25ex\right\vert\kern-0.25ex\right\vert}}
\title{Residual-Based a posteriori error estimation for $hp$-adaptive
  finite element methods for the Stokes equations}
\author{%
  A.~Ghesmati\thanks{Department of Mathematics, Texas
    A\&M University, College Station, TX 77843-3368, USA.
    \newline\indent
     \ddag Department of Mathematics, Colorado State
     University, Fort Collins, CO 80523-1864, USA.
     \newline\indent
    \# Computational Engineering and Energy Sciences Group,
    Computational Sciences and Engineering Division,
    Oak Ridge National Laboratory, 1 Bethel Valley Rd, TN 37831, USA.
    \newline\indent
    Emails:
    \url{aghesmati@math.tamu.edu},
    \url{bangerth@colostate.edu}
    \url{turcksinbr@ornl.gov},
    }
  \and
  W.~Bangerth$\phantom{\,}^{\ddag}$
  \and 
  B.~Turcksin$\phantom{\,}^{\#}$\footnote{This manuscript has been authored by
  UT-Battelle, LLC under Contract No. DE-AC05-00OR22725 with the U.S. Department
  of Energy. The United States Government retains and the publisher, by
  accepting the article for publication, acknowledges that the United States
  Government retains a non-exclusive, paid-up, irrevocable, world-wide license
  to publish or reproduce the published form of this manuscript, or allow others
  to do so, for United States Government purposes. The Department of Energy will
  provide public access to these results of federally sponsored research in
  accordance with the DOE Public Access
  Plan (http://energy.gov/downloads/doe-public-access-plan). }
}
\begin{document}        
        \maketitle              
        
\begin{abstract}
        We derive a residual-based a posteriori error estimator for the conforming $hp$-Adaptive Finite Element Method ($hp$-AFEM) for 
        the steady state Stokes problem describing the slow motion of an incompressible fluid.
        This error estimator is obtained by extending the idea of a
        posteriori error estimation for the classical $h$-version of
        AFEM. We also establish the reliability and efficiency of the 
        error estimator. The proofs are based on the 
        well-known Cl{\'e}ment-type interpolation operator introduced in \cite{Melenk2005}
        in the context of the $hp$-AFEM. Numerical experiments show the performance of an adaptive hp-FEM algorithm using the proposed a posteriori error estimator.
\end{abstract}

\section{Introduction} \label{sec:Intro}

$h$-adaptive finite element methods -- in which the mesh size is
adjusted to resolve features of the solution -- have been known to be
efficient tools for solving partial differential equations since the
late 1970s \cite{Babuska_1978,Babuska_1979}. The development of
practical and efficient estimators of the local error over the past 25
years \cite{Verfurth-1996,Ainsworth1997,BR03} 
has made them a standard tool in the finite element analysis of many
equations and is now widely used in applications.

On the other hand, the $p$ or $hp$ versions of adaptive finite element
methods --
in which one adjusts either the polynomial degree of the approximation
on every cell, or both the polynomial degree and the mesh size -- has
seen much less practical attention. Originally introduced in
\cite{Babuska_1981,Babuska-Dorr-1981}, it is known both theoretically
and practically
that the $hp$-adaptive FEM can achieve exponential rates of
convergence with respect to the number of degrees of freedom
\cite{Schwab1998,Melenk-Schwab,Schotzau-Schwab,Costabel-Dauge}. However,
it is technically much more complicated to derive reliable and
efficient estimates of the error for $hp$ approximations. Furthermore,
even once estimates for the error on each cell are available, one is
faced with the decision whether increasing the polynomial degree $p$ of
the approximation or reducing the mesh size $h$ is more likely to
reduce the error, measured with regard to the computational cost of
the two possible resulting meshes (see, for example,
\cite{Eibner-Melenk,Wihler,Demkowicz-2002,Rachowicz-89,Ainsworth-98,Heuveline-2003,Buerg_Conv}). Finally,
the implementation of 
algorithms and data structures for conforming $hp$ finite element
methods is complex in practice \cite{BK07}.

Furthermore, it has proven to be significantly more difficult to
extend many results that are well-established for $h$ adaptivity to
$hp$ adaptivity for equations that are not as simple as the Laplace
equation. Consequently, published theoretical considerations of error
estimates and optimality of refinement strategies are still largely
confined to the Laplace equation.  Despite the known superiority of
$hp$ adaptivity in terms of computational efficiency, its practical
impact has therefore not been as profound as $h$-adaptive methods.

In this contribution, we address one of these difficulties by deriving
residual-based a posteriori error estimates for conforming $hp$
discretizations of the Stokes equation. This work is inspired by
previous work for the Laplace equation
\cite{Buerg_Conv,Dorfler2007,Melenk2001}. 
However, it has to address the
key difficulty of the Stokes equation that the solution is not the
unconstrained minimizer of an energy. Therefore, the Stokes operator is not 
positive definite, so that working with it is not as straightforward
as for example with elliptic operators with their implied coercivity condition.

In particular, we present the following results:
\begin{itemize}
\item We derive estimates for the error between the finite-dimensional
  $hp$ approximation and the continuous solution of the Stokes
  equation. 

\item As in similar approaches for the Laplace equation, it is not
  easily possible to show that these estimates are reliable and efficient,
  i.e., that the true error is bounded from above and below by our
  estimator up to a constant that does not depend on $h$ or
  $p$. This is so because the inverse estimates that are used to
  derive reliability and efficiency statements typically involve the
  polynomial degree $p$. To overcome this deficiency, we instead
  introduce a whole \textit{family} of estimates $\eta_{\alpha}$ parameterized by an
  index $\alpha\in[0,1]$. For a fixed $\alpha$, we can not show that
  an estimator is both efficient and reliable; on the other hand, we
  can show that for some members of this family, either one or the
  other property hold. However, we demonstrate through numerical experiments
  that our estimator for a given $\alpha$ is, in practice, indeed both reliable and efficient.

\item We devise a strategy to mark cells for either $h$ or $p$
  refinement based on criteria for a systematic reduction of the
  error.

\item Although we make no claims about the optimality of this
strategy -- i.e., we can not prove that among all strategies it
leads to the greatest error reduction -- we show numerical
results that suggest that the strategy can achieve the desired
exponential convergence rate for the $hp$-adaptive refinement.
\end{itemize}
To the best of our knowledge, none of these properties have previously
been derived or demonstrated for the Stokes equation using continuous $hp$-adaptive
finite element methods. (However, some related work for
\textit{dis}continuous Galerkin discretizations of the Stokes
equations is available in \cite{Houston2004}.)

The outline of the remainder of this paper is as follows:
In Section \ref{sec:model problem}, we introduce the
Stokes problem, its weak formulation and the conforming discretization
with which we intend to solve it computationally. In Section \ref{sec:
Aux_Results}, we introduce necessary notation and
state our assumptions as well as some important theoretical results (such as the
Cl\'{e}ment interpolation operator and polynomial inverse estimates)
on which we rely throughout this work. The main results are derived
in Section \ref{sec: Error Est}, where we develop an $hp$ residual-based a posteriori
error estimator for the Stokes problem, followed by the analysis of the
reliability and the efficiency of our error estimator. In Section \ref{sec:hp-Adaptive Ref}
we discuss the details of our $hp$ algorithm, i.e., the criterion upon
which we choose either $h$ or $p$ refinement. Finally in Section \ref{sec: Numerical Results} we present
numerical results and demonstrate the performance of the proposed
error estimators using practical examples.

\section{The Stokes problem and basic assumptions}\label{sec:model problem}
Let $\Omega\in\IR^{\text{2}}$ 
be an open and connected domain with smooth boundary $\Gamma=\partial\Omega$ such that it satisfies a Lipschitz condition. 
$u(\text{x})$ is the velocity 
and $\varrho(\text{x})$ be the pressure of the fluid at some point $\text{x} \in \Omega$, respectively. 

Given body forces $f \in L^2(\Omega)^{\text{2}}$ and the constant viscosity parameter $\nu>0$, consider stationary incompressible fluid flow as our model problem: For the Stokes equations, we are interested in finding $u:\Omega\to\IR^{\text{2}}$ and $\varrho:\Omega\to\IR$ such that
\begin{equation}\label{stocks}
\begin{split}
-\nu \Delta u+ \nabla \varrho &= f \qquad \text{in } \enspace \Omega,\\
-\nabla\cdot u &= 0 \qquad\text{in } \enspace \Omega,\\
u &= 0 \qquad\text{on } \enspace \Gamma.
\end{split}
\end{equation}
For ease of presentation, we here assume homogenous \textit{no slip}
boundary condition on the velocity field.
(However, similar results as the ones shown herein are also valid for other type of boundary conditions.)
To ensure uniqueness of solution, we require \textit{vanishing mean}
for pressure field, i.e., that $\int_{\Omega} \varrho = 0$. Here and
below, we limit ourselves to the two-dimensional case primarily
because Lemmas~\ref{inverse_ineq} and \ref{edge_inverseq} below are only
available for this case; however, we expect that with additional work,
all main results herein could also be shown to hold in three space dimensions.

We denote the standard Sobolev spaces by $H^m(\Omega)$ for $m\in\IN_0$. In particular, the norm and the scalar product of $L^2(\Omega)= H^0(\Omega)$ are
denoted by $\Vert\cdot\Vert_\Omega$ and $(\cdot,\cdot)_\Omega$, respectively. To account for homogeneous Dirichlet boundary conditions, we set
\begin{equation*}
H_0^1(\Omega):=\lbrace v\in H^1(\Omega): \varphi=0\text{ on }\Gamma \rbrace.
\end{equation*}
Further, we denote the space containing all functions in $L^2(\Omega)$ with zero mean value by
\begin{equation*}
L_0^2(\Omega):=\lbrace v\in L^2(\Omega): (\varphi, 1)_\Omega=0 \rbrace
\end{equation*}
and define
\begin{equation*}
\calH (\Omega):= H_0^1(\Omega)^{\text{2}} \times L_0^2(\Omega).
\end{equation*}
Then, we introduce the bilinear form $\mathcal{L}:\calH (\Omega)\times\calH (\Omega)\to\IR$ by
\begin{equation}\label{bilin}
\mathcal{L}([u,\varrho];[v,q]):= (\nu \nabla u, \nabla v)_\Omega-(\varrho, \nabla\cdot v)_\Omega- (\nabla\cdot u, q)_\Omega.
\end{equation}
The weak formulation of problem (\ref{stocks}) then seeks $[u, \varrho]\in \calH$ so that
\begin{equation}\label{weak}
\mathcal{L}([u,\varrho];[v,q])=(f,v)_\Omega \qquad \forall [v,q]\in \calH (\Omega).
\end{equation}
Due to the continuous $\inf$-$\sup$ condition 
\begin{equation*}
\inf_{[u,\varrho]\in \calH } \sup_{[v,q]\in \calH}\frac{\mathcal{L}([u,\varrho];[v,q])}{\left(\Vert\nabla u \Vert_\Omega+ \Vert \varrho \Vert_\Omega\right)\left(\Vert\nabla v \Vert_\Omega+ \Vert q \Vert_\Omega\right)}\geq\kappa>0,
\end{equation*}
where $\kappa$ is the $\inf$-$\sup$ constant depending only on $\Omega$, the weak problem is
well-posed and has a unique solution, see \cite{Brezzi1974} and
\cite{Girault1986}.

Now, assume $\calT= \lbrace K \rbrace$ is a triangulation of domain $\Omega$. For each element
$K$, we associate an element map $T_K:\hat{K}\to K$ where the
reference cell is $\hat{K}=[0,1]^{\text{2}}$. Further, we define the mesh 
size vector $h:=\left(h_K\right)_{K\in\calT},$ where $h_K:=\diam(K)$. With each element $K \in \calT$, we associate a polynomial degree $p_K \in \IN$ and collect them in a polynomial degree vector $p:=\left(p_K\right)_{K\in\calT}$. Throughout
this work, we assume that the discretization $(\calT,p)$ of $\Omega$ is $\left(\gamma_h,\gamma_p\right)$-regular \cite{Schwab1998,Szab1991}.

\begin{mydef}[$\left(\gamma_h,\gamma_p\right)$-Regularity]\label{regular_def}
A discretization $(\calT,p)$ is called $\left(\gamma_h,\gamma_p\right)$-regular if and only if there exist constants $\gamma_h,\gamma_p > 0$ such that for all $K, K' \in \calT$ with 
$\overline K\cap \overline{K'}\neq \emptyset$ there holds
\begin{equation} \label{hp-regularity}
\gamma_h^{-1}h_K\leq h_{K'} \leq \gamma_hh_K,
\qquad \text{and} \qquad
\gamma_p^{-1}p_K \leq p_K'\leq \gamma_pp_K.
\end{equation}
\end{mydef}

In other words, the condition implies that the element sizes and also
the polynomial degrees of neighboring elements are comparable.

To define the discrete solution space, for an element $K\in\calT$
denote $\mathcal{F}(K)$ the set of all interior faces of cell
$K$. Then, define by $h_{f}:=\diam(f)$ the
diameter of face $f\in\cal F(K)$ and by $p_{f}:= \max\left\{p_K,
p_{K'}\right\}$ its polynomial degree where for $K,K'\in\calT$ are
the cells adjacent to $f$.
Further, the problem is discretized by the standard $(p_{k}, p_{k-1})$
Taylor-Hood finite element. The corresponding $hp$ spaces for velocity and pressure are then
\begin{align} \label{velocity-FE-space}
V^p_u(\calT)^2 &:= \left\{u \in H_0^1(\Omega)^2:\enspace u|_{K}  \circ  T_K \in \calQ_{p_K}^2\left(\hat{K}\right) \text{ for all }K\in\calT\right\},
\\
\label{pressure-FE-space}
V^p_\varrho(\calT)&:= \left\{\varrho \in L_0^2(\Omega):\enspace \varrho|_{K}\circ T_K \in \calQ_{p_K-1}\left(\hat{K}\right)\text{ for all }K\in\calT\right\}
\\
\label{FE-space}
\calV^p(\calT) &:=V_u^p(\calT)^2\times V_\varrho^p(\calT) \subseteq \mathcal{H}(\Omega).
\end{align}
Here, $\calQ_r$ is the tensor-product polynomial space of complete degree at most $r\in\IN_0$.
Then, the discrete approximation to ($\ref{weak}$) consists of seeking $\left[u_{\FE},\varrho_{\FE}\right] \in \calV^p(\calT)$ such that
\begin{equation}\label{discrete_weak}
\mathcal{L}\left(\left[u_{\FE},\varrho_{\FE}\right];\left[v_{\FE},q_{\FE}\right]\right)=\left(f,v_{\FE}\right)_\Omega\qquad\forall\left[v_{\FE},q_{\FE}\right]\in\calV^p(\calT).
\end{equation}
This choice of spaces satisfies the discrete Babuska-Brezzi condition \cite{Arnold84}
\begin{equation*}
\inf_{[u_h,\varrho_h]\in \cal{H} } \sup_{[v_h,q_h]\in \cal{H}}\frac{\mathcal{L}([u_h,\varrho_h];[v_h,q_h])}{\left(\Vert\nabla u_h \Vert+ \Vert \varrho_h \Vert \right)\left(\Vert\nabla v_h \Vert+ \Vert q_h \Vert\right)}\geq\kappa_d>0,
\end{equation*}
where the constant $\kappa_d$ is independent of cell size $h$ and
polynomial degree $p$.
Consequently, problem \eqref{discrete_weak} is well posed.

Furthermore, Galerkin orthogonality holds:
Let $[u,\varrho]\in\calH$ be the solution of ($\ref{weak}$) and
$\left[u_{\FE},\varrho_{\FE}\right]\in\calV^p(\calT)$ be the solution
of ($\ref{discrete_weak}$), then
	\begin{equation} \label{Galerkin-Orthogonality}
	\mathcal{L}\left(\left[u-u_{\FE},\varrho-\varrho_{\FE}\right];[v_{\FE},q_{\FE}]\right)=0\qquad\forall\left[v_{\FE},q_{\FE}\right]\in\calV^p(\calT).
	\end{equation}

\section{Auxiliary results} \label{sec: Aux_Results}
We provide some auxiliary results which we use later in our work. This includes an $H^1$-conforming interpolation operator that preserves homogeneous Dirichlet boundary conditions, and some polynomial smoothing estimates. The $H^1$-conforming interpolation operator is a Cl\'{e}ment-type interpolation which replaces point evaluation
by a local average \cite{Clement1975}. The procedure does not require the extra regularity of the
point evaluation, and is consequently well-defined for functions in $H^1(\Omega)$. In \cite{Scott1990}, this interpolation operator was
modified in such a way that it also preserves polynomial boundary conditions. Melenk in \cite{Melenk2005} extended the aforementioned $H^1$-conforming interpolation
to the context of $hp$-adaptive finite element spaces.\\
In our definition of $hp$-Cl\'{e}ment interpolation operators,
consider $\mathcal{T}$ as a $(\gamma_{h}, \gamma_{p})$-regular
triangulation of $\IR^{\text{d}}$. (For cases where we would
want to impose Dirichlet boundary conditions on only a subset
$\Gamma_D\subset\Gamma$, we can require that $\Gamma_D$ can be
exactly represented by a collection of faces, i.e.,
$\bar\Gamma_D=\cup_{K \in \cal{T}} \partial K \cap\bar{\Gamma}_D$.)
Then, for a cell $K\in\calT$ and a face $f \in \mathcal{F}(K)$ we
define the patch sets
\begin{align} \label{patch}
\omega_K&:=K \cup \bigcup\{ L \in\calT: \text{$L$ shares a common edge with $K$}\},
\\
\label{patch-face}
\omega_f&:=\bigcup\{ L \in\calT : \text{$f$ is an edge of $L$}\}.
\end{align}

The following result from \cite{Melenk2001} then provides an estimate for the interpolation error in terms of the gradient of the interpolated function:
\begin{mythm}[$H^1$-Conforming Interpolation]\label{h1interpol_thm}
Let $\mathcal{T}$ be $\left(\gamma_h,\gamma_p\right)$-regular and $K \in \mathcal{T}$ be arbitrary. 
Then, there exists a bounded linear operator
$\Pi^{hp}:H_0^1(\Omega)^2\to \mathcal{V}^{p} (\mathcal{T})$ -- namely,
the Cl\'ement interpolation operator --, and
a constant $C >0$ independent of mesh size $h$ and polynomial degree
$p$ such that for all $u\in H_0^1(\Omega)$ and all
$f\in\mathcal{F}(K)$
\begin{align}\label{clem_1}
\left\Vert u-\Pi^{hp} u\right\Vert_{L^2(K)}&\leq C\frac{h_K}{p_K}\Vert \nabla
u\Vert_{L^2({\omega_K})},
\\
\label{clem_2}
\left\Vert u-\Pi^{hp} u\right\Vert_{L^2(f)}&\leq C\sqrt{\frac{h_f}{p_f}}\Vert\nabla u\Vert_{L^2({\omega_f})}.
\end{align}
\end{mythm}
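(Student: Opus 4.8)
The plan is to reproduce, in the $hp$ setting, the partition-of-unity construction of a Cl\'ement-type operator, following \cite{Melenk2001,Melenk2005}. Let $\{\varphi_V\}_V$ be the nodal ``hat'' functions attached to the mesh vertices $V$: each $\varphi_V$ pulls back to a polynomial of degree one in each reference variable, $0\le\varphi_V\le1$, it is supported on the vertex patch $\omega_V:=\bigcup\{L\in\calT:V\in\overline L\}$, and $\sum_V\varphi_V\equiv1$ on $\overline\Omega$. To each interior vertex $V$ I would attach a local polynomial approximation $\pi_Vu\in\calQ_{p_V-1}$ of $u$ on $\omega_V$, with $p_V:=\min\{p_K:V\in\overline K\}$, and to each vertex $V\subset\Gamma$ I would attach $\pi_Vu:=0$. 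The operator is then
\[
\Pi^{hp}u:=\sum_V\varphi_V\,\pi_Vu,
\]
applied componentwise. By construction $\varphi_V\pi_Vu$ restricted to a cell $K$ is a polynomial of degree at most $p_K$ in each reference variable (here $(\gamma_h,\gamma_p)$-regularity gives $\calQ_{p_V-1}\subseteq\calQ_{p_K-1}$ for every $K\ni V$), the pieces match across interelement faces, and $\Pi^{hp}u$ vanishes on $\Gamma$; hence $\Pi^{hp}u\in V^p_u(\calT)^2\subseteq H^1_0(\Omega)^2$. Linearity and boundedness of $\Pi^{hp}$ are then read off from the estimates derived below.

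The technical core is a single-patch $hp$ approximation estimate: for a patch $\omega$ that is the image, under element maps, of one of finitely many reference configurations of cells, and for $w\in H^1(\omega)$ — with $w=0$ on $\partial\omega\cap\Gamma$ when $\omega$ abuts the boundary — there is $\pi w\in\calQ_{p-1}$ with $\|w-\pi w\|_{L^2(\omega)}\le C(h_\omega/p)\|\nabla w\|_{L^2(\omega)}$ and $\|\nabla(w-\pi w)\|_{L^2(\omega)}\le C\|\nabla w\|_{L^2(\omega)}$, with $C$ independent of $h_\omega$ and $p$. On the reference patch this rests on one-dimensional Jacobi/Legendre approximation theory — the $L^2$-orthogonal (or $H^1$-orthogonal, or averaged-Taylor) projection of an $H^1$ function onto polynomials of degree $p$ converges at the algebraic rate $p^{-1}$ against the $H^1$-seminorm — tensorised to two dimensions, combined with $H^1$-seminorm stability of the chosen projector and exact reproduction of constants; the factor $h_\omega$ then enters through the change of variables. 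For patches touching $\Gamma$ one instead takes $\pi w=0$ and uses the Friedrichs inequality $\|w\|_{L^2(\omega)}\le Ch_\omega\|\nabla w\|_{L^2(\omega)}$, which holds because $w$ vanishes on part of $\partial\omega$; since $p^{-1}\le1$ this is consistent with the stated rate, and it is precisely here that the hypothesis $u\in H^1_0(\Omega)$ is used.

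With the building block available, \eqref{clem_1} and \eqref{clem_2} follow by the usual assembly. Fix $K\in\calT$. From $\sum_V\varphi_V\equiv1$ we get on $K$ the identity $u-\Pi^{hp}u=\sum_{V\in\overline K}\varphi_V\,(u-\pi_Vu)$, a sum over the (at most four) vertices of $K$; using $0\le\varphi_V\le1$ and the building block on each $\omega_V$ gives $\|u-\Pi^{hp}u\|_{L^2(K)}\le\sum_{V\in\overline K}\|u-\pi_Vu\|_{L^2(\omega_V)}\le C\sum_{V\in\overline K}(h_V/p_V)\|\nabla u\|_{L^2(\omega_V)}$. By \eqref{hp-regularity}, $h_V/p_V\le C\,h_K/p_K$, each $\omega_V$ with $V\in\overline K$ lies in $\omega_K$, and $\omega_K$ meets only boundedly many patches, so summing yields \eqref{clem_1}. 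For \eqref{clem_2}, restrict the same identity to a face $f\in\mathcal F(K)$ with $f\subset\partial K$, so $u-\Pi^{hp}u=\sum_{V\in\overline f}\varphi_V(u-\pi_Vu)$ on $f$, and apply the scaled multiplicative trace inequality $\|w\|_{L^2(f)}^2\le C\bigl(h_K^{-1}\|w\|_{L^2(K)}^2+\|w\|_{L^2(K)}\|\nabla w\|_{L^2(K)}\bigr)$ with $w=u-\pi_Vu$; inserting $\|w\|_{L^2(K)}\le C(h_K/p_K)\|\nabla u\|_{L^2(\omega_V)}$ and $\|\nabla w\|_{L^2(K)}\le C\|\nabla u\|_{L^2(\omega_V)}$ and using $p_K\ge1$ yields $\|w\|_{L^2(f)}\le C\sqrt{h_K/p_K}\,\|\nabla u\|_{L^2(\omega_V)}$, whence \eqref{clem_2} after summing over the two endpoints of $f$ and invoking $h_f\simeq h_K$, $p_f\simeq p_K$ and bounded overlap.

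I expect the main obstacle to be the single-patch building block with its sharp $p^{-1}$ rate: one must prove the one-dimensional polynomial approximation estimate for merely $H^1$ data with a $p$-independent constant, tensorise it without losing the rate, and pick a projector that is simultaneously $O(h/p)$-accurate in $L^2$, stable in the $H^1$-seminorm, and constant-preserving, all uniformly over the (bounded, but non-unique) family of reference patch geometries allowed by $(\gamma_h,\gamma_p)$-regularity. The secondary difficulty is organisational: forcing the single operator $\Pi^{hp}$ to respect the homogeneous Dirichlet condition exactly (via $\pi_Vu=0$ at boundary vertices together with the boundary Friedrichs inequality) while keeping the estimates optimal on cells abutting $\Gamma$, and bookkeeping the finitely-many-neighbours and comparable-size/degree consequences of \eqref{hp-regularity} so that all patch-wise bounds transfer to the cell- and face-wise bounds claimed in \eqref{clem_1}--\eqref{clem_2}.
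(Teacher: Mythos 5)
The paper does not actually prove this theorem: it delegates entirely to \cite[Theorem 3.3]{Melenk2005} (following \cite{Schwab1998}). Your partition-of-unity skeleton $\Pi^{hp}u=\sum_V\varphi_V\pi_Vu$ with patchwise polynomial approximations is indeed the right general framework and matches the structure of Melenk's construction for \emph{interior} patches, where the building block you describe (simultaneous $O(h/p)$ $L^2$-accuracy and $H^1$-stability of a constant-preserving projector, via tensorized Legendre/Jacobi approximation) is classical Babu\v{s}ka--Suri-type theory and your assembly of \eqref{clem_1} and \eqref{clem_2} from it, including the multiplicative trace inequality step, is sound.

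The genuine gap is your treatment of boundary vertices, and it is not a technicality --- it is the entire reason the theorem requires \cite{Melenk2005} rather than the older Cl\'ement/Scott--Zhang literature. Setting $\pi_Vu:=0$ for $V\subset\Gamma$ and invoking the Friedrichs inequality gives only $\Vert u-\pi_Vu\Vert_{L^2(\omega_V)}=\Vert u\Vert_{L^2(\omega_V)}\le Ch_{\omega_V}\Vert\nabla u\Vert_{L^2(\omega_V)}$, i.e.\ a bound of order $h$, not $h/p$. Your remark that ``since $p^{-1}\le1$ this is consistent with the stated rate'' inverts the required inequality: $Ch\Vert\nabla u\Vert$ is \emph{weaker} than $C(h/p)\Vert\nabla u\Vert$, so on every cell touching $\Gamma$ your argument proves \eqref{clem_1} with a loss of a full factor $p$ (and \eqref{clem_2} with a loss of $p^{1/2}$). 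To repair this one must construct, on boundary patches, a local polynomial approximation that itself vanishes on $\partial\omega_V\cap\Gamma$ \emph{and} still achieves the $h/p$ rate for data that is merely in $H^1$ with zero trace there; this requires a nontrivial polynomial lifting/antiderivative argument on the reference configuration (the technical heart of \cite{Melenk2005}), not the Friedrichs inequality. Relatedly, you locate the ``main obstacle'' in the interior single-patch estimate, which is in fact standard; the obstacle is the boundary-condition-preserving version of it.
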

\begin{proof}
Following the lines of \cite{Schwab1998}, one can find proofs in \cite[Theorem 3.3]{Melenk2005}.
\end{proof}

Next, let us present some polynomial smoothing estimates that
are widely used in the error analysis
of many numerical methods for partial differential equations and
integral equations \cite{Bernardi2001, Melenk2001}. We will later use
them in proving upper and lower bounds of our error estimator. Specifically,
define the smoothing weight functions $\Phi_{K}: K\subset \IR^{2} \to\IR^{+}$ and $\Phi_{\omega_f}:  \omega_f\subset \IR^{2}\to\IR^{+}$ by
\begin{align}\label{weightfunc}
\Phi_{K}(x) &:= \frac{1}{h_K}\operatorname{dist}\left(x,\partial K\right),
\\
\label{weightfunc_edge}
\Phi_{\omega_{f}}(x) &:= \frac{1}{\text{diam}(\omega_f)} \text{dist} (x, \partial \omega_f).
\end{align}
Then we have:
\begin{lemma}\label{inverse_ineq}
	Let $\delta \in [0,1]$, $a,b\in\IR$ such that $-1\le a\le
        b$. Then, for any $\pi_p \in \calQ_p\left( K \right)$, there
        exists some constant $C>0$ independent of $h$ and $p$ so that
	\begin{align}\label{inv-equ-01}
	\Vert \pi_p\left(\Phi_{K}\right)^a \Vert_{\text{L}^2(K)} &\leq C (a,b) p^{(b-a)}
	\Vert \pi_p\left(\Phi_{K}\right)^b \Vert_{\text{L}^2(K)},
        \\
        \label{inv-equ-02}
	\Vert \nabla \pi_p \left(\Phi_{K}\right)^{\delta} \Vert_{\text{L}^2(K)}
	&\leq \frac {C (\delta) p^{(2-\delta)}}{h_K} \Vert \pi_p \left(\Phi_{K}\right)^{\frac{\delta}{2}} \Vert_{\text{L}^2(K)}.
	\end{align}
\end{lemma}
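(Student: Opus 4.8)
The plan is to strip the $h$‑dependence by scaling to the reference cell, and then to reduce both inequalities to a single one–dimensional, Jacobi‑weighted polynomial inverse estimate.

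\emph{Reduction to the reference cell.}
Pull back through the element map $T_K:\hat K\to K$, writing $\hat\pi_p:=\pi_p\circ T_K\in\calQ_p(\hat K)$ and $\hat\Phi(\hat x):=\operatorname{dist}(\hat x,\partial\hat K)$. Since $(\calT,p)$ is $(\gamma_h,\gamma_p)$‑regular, $T_K$ is non‑degenerate with $\|\nabla T_K\|\le C h_K$ and $\|\nabla T_K^{-1}\|\le C h_K^{-1}$, so $\Phi_K\circ T_K$ is equivalent to $\hat\Phi$ on $\hat K$ and, for any real $s$, $\|\pi_p\Phi_K^{s}\|_{L^2(K)}$ is equivalent to $h_K\|\hat\pi_p\hat\Phi^{s}\|_{L^2(\hat K)}$, with constants depending only on $\gamma_h$. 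The factor $h_K$ cancels in \eqref{inv-equ-01}, so it suffices to prove the scale‑free bound $\|\hat\pi_p\hat\Phi^{a}\|_{L^2(\hat K)}\le C(a,b)\,p^{\,b-a}\|\hat\pi_p\hat\Phi^{b}\|_{L^2(\hat K)}$. For \eqref{inv-equ-02}, differentiating $\pi_p\Phi_K^\delta=(\hat\pi_p\hat\Phi^\delta)\circ T_K^{-1}$ brings out the factor $\nabla T_K^{-1}$, which is exactly the $1/h_K$; so it suffices to show $\|\hat\nabla(\hat\pi_p\hat\Phi^\delta)\|_{L^2(\hat K)}\le C(\delta)\,p^{\,2-\delta}\|\hat\pi_p\hat\Phi^{\delta/2}\|_{L^2(\hat K)}$.

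\emph{The one–dimensional core.}
On $\hat K=[0,1]^2$ we have $\hat\Phi(x_1,x_2)=\min\{\psi(x_1),\psi(x_2)\}$ with $\psi(t):=\min\{t,1-t\}$, and near each endpoint $\psi^{2s}$ is equivalent to the Jacobi weight $(t(1-t))^{2s}$. The crux is therefore the one‑dimensional estimate: for a univariate polynomial $q$ of degree at most $p$ and $-1\le a\le b$,
\begin{equation*}
\|q\,\psi^{a}\|_{L^2(0,1)}\le C(a,b)\,p^{\,b-a}\,\|q\,\psi^{b}\|_{L^2(0,1)} .
\end{equation*}
From this, \eqref{inv-equ-01} follows by partitioning $\hat K$ along its diagonals into the four triangles on each of which $\hat\Phi$ equals a single affine coordinate, applying the one‑dimensional estimate slicewise in that coordinate, and integrating over the transverse variable. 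The one‑dimensional inequality is classical; I would prove it by passing to $[-1,1]$, where $\psi^{2s}$ becomes (up to equivalence) the Gegenbauer weight $(1-x^2)^{2s}$, and analysing the endpoint behaviour by a dyadic decomposition of a neighbourhood of $t=0$ together with the Chebyshev–Markov fact that a degree‑$p$ polynomial is essentially constant on the boundary layer $(0,p^{-2}]$, so that the mass in consecutive dyadic shells can be transferred at bounded cost and then summed; alternatively one simply invokes \cite{Bernardi2001,Melenk2001}.

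\emph{The gradient estimate.}
For \eqref{inv-equ-02}, use the product rule
\begin{equation*}
\hat\nabla(\hat\pi_p\hat\Phi^\delta)=\hat\Phi^{\delta}\,\hat\nabla\hat\pi_p+\delta\,\hat\pi_p\,\hat\Phi^{\delta-1}\,\hat\nabla\hat\Phi ,
\end{equation*}
and the fact that $\operatorname{dist}(\cdot,\partial\hat K)$ is $1$‑Lipschitz, so $|\hat\nabla\hat\Phi|\le1$ a.e. The second term is bounded in $L^2(\hat K)$ by \eqref{inv-equ-01} with exponents $a=\delta-1\le b=\delta/2$, at the cost $p^{\,1-\delta/2}\le p^{\,2-\delta}$. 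The first term is treated by a weighted polynomial Markov–Bernstein inequality on $\hat K$ (again reducible to the one‑dimensional Jacobi‑weighted setting), followed by a further use of \eqref{inv-equ-01} to bring the weight back to $\hat\Phi^{\delta/2}$; keeping track of the powers of $p$ yields the exponent $2-\delta$. For $\delta=0$ this degenerates to the familiar inverse estimate $\|\hat\nabla\hat\pi_p\|_{L^2(\hat K)}\le Cp^2\|\hat\pi_p\|_{L^2(\hat K)}$.

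\emph{Main obstacle.}
Everything hinges on the \emph{sharp} power $p^{\,b-a}$ in the one‑dimensional weighted inverse estimate: a naive bound—estimating $t^{2(a-b)}$ by its value at $t=p^{-2}$—only yields $p^{\,2(b-a)}$, and reducing this to $p^{\,b-a}$ is exactly what forces the dyadic/boundary‑layer argument and the precise control of polynomials near the endpoints, equivalently the norm and pointwise asymptotics of Jacobi polynomials. The non‑tensor form of $\hat\Phi$ on the square and the power bookkeeping in the gradient estimate are comparatively routine once the one‑dimensional estimate is in hand.
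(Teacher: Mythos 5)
The paper does not actually prove this lemma: its ``proof'' is a pointer to \cite{Bernardi2001} and \cite{Melenk2001}. Your proposal therefore takes a genuinely different route in the sense that it reconstructs the argument behind those references, and the architecture is the right one: scale to the reference square to strip out $h_K$, reduce to one-dimensional Jacobi-weighted inverse estimates by slicing the square into the four triangles on which $\hat\Phi$ coincides with a single affine coordinate, and control the endpoint behaviour through the $O(p^{-2})$ boundary layer. Your treatment of \eqref{inv-equ-02} (product rule, the Lipschitz bound $|\hat\nabla\hat\Phi|\le 1$, a weighted Bernstein inequality for the $\hat\nabla\hat\pi_p$ term followed by a weight-lowering step) is sound, and the exponents do combine to $p^{2-\delta}$.

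However, the crux step for \eqref{inv-equ-01} fails exactly where you locate the ``main obstacle.'' For the weight pair you work with --- $\psi^{2a}$ versus $\psi^{2b}$ in the integrals, i.e.\ the norms $\Vert q\psi^{a}\Vert$ and $\Vert q\psi^{b}\Vert$ --- the sharp one-dimensional rate is $p^{2(b-a)}$, not $p^{b-a}$: the classical Jacobi-weight inequality costs $p^{2}$ per unit decrement of the weight exponent, and this is saturated by polynomials whose $L^2$ mass concentrates in the boundary layer of width $p^{-2}$, where each power of the weight contributes a factor $p^{-2}$. No dyadic or boundary-layer refinement can improve $p^{2(b-a)}$ to $p^{b-a}$ in this normalization; the ``naive'' bound you dismiss is already optimal. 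The rate $p^{b-a}$ is correct for the half-power normalization $\Vert q\psi^{a/2}\Vert\le C p^{b-a}\Vert q\psi^{b/2}\Vert$ with $-1<a\le b$, which is the form of the cited lemma in \cite{Melenk2001} and is also what the integrability condition $a\ge -1$ and the later applications in the paper (e.g.\ $a=0$, $b=\alpha$ to pass from $\eta_{0;K}$ to $p^{\alpha}\eta_{\alpha;K}$, whose weight is $\Phi_K^{\alpha/2}$) implicitly presuppose. So you should either prove the half-power version or accept the exponent $2(b-a)$ for the inequality as literally written; as it stands, your one-dimensional core is not provable, and the claimed sharpening is the step that would fail.
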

\begin{proof}
  See \cite[Lemmas 4, 5]{Bernardi2001} and \cite [Lemma 2.5]{Melenk2001}.           
\end{proof}

The next lemma provides results for the extension of a polynomial from an edge to a domain. These estimates are used in the efficiency analysis of our error estimator.
\begin{lemma}\label{edge_inverseq}
	Let $\hat{f}$ be the edge of unit square $\hat{K}$, and $0 \le \alpha \leq 1$. 
	 $\Phi_{\omega_{\hat{f}}}$ defined as in (\ref{weightfunc_edge}) the edge $\hat{f}$
	 corresponding to the unit cell $\hat{K}$. Then there exists $C_{\alpha}>0$, such that for any polynomial $\pi_{p}
	 \in \calQ_p$ and every $\delta \in (0, 1]$, there exists some extension $v_{\hat{f}} \in H_{0}^1\left(
	 \hat{K} \right)$ so that:
	\begin{align*}
	  v_{\hat{f}}\vert_{\hat{f}}&=\pi_p\Phi_{\omega_{\hat{f}}}^{\alpha}, \hspace{3pt} v_{\hat{f}}\vert_{\partial \hat{K} \backslash \hat{f}}= 0, 
          \\
	  \Vert v_{\hat{f}} \Vert_{L^2({\hat{K}})}^2 &\leq C_{\alpha} \delta
           \Vert \pi_p \Phi^{\frac{\alpha}{2}}_{\omega_{\hat{f}}} \Vert_{L^2({\hat{f}})}^2,
          \\
	  \Vert \nabla v_{\hat{f}} \Vert_{L^2(\hat{K})}^2 &\leq C_{\alpha} (\delta  p^{2(2-\alpha)}+ \delta^{-1}) \left\Vert \pi_p \Phi^{\frac{\alpha}{2}}_{\omega_{\hat{f}}} \right\Vert^{2}_{L^2(\hat{f})}.
	\end{align*}
\end{lemma}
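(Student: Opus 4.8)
The plan is to build $v_{\hat f}$ by a tensor-product ansatz --- the prescribed edge datum times a one-dimensional cutoff confining $v_{\hat f}$ to a layer of width $\delta$ along $\hat f$ --- and to estimate the two partial derivatives separately, the normal one producing the $\delta^{-1}$ contribution and the tangential one the $\delta\,p^{2(2-\alpha)}$ contribution via the weighted polynomial inverse inequalities of Lemma~\ref{inverse_ineq}. First I would normalise coordinates so that $\hat K=(0,1)^2$ and $\hat f=(0,1)\times\{0\}$, and note that the restriction of $\Phi_{\omega_{\hat f}}$ to $\hat f$ equals $c_0\,\Phi_{\hat f}$, where $\Phi_{\hat f}(x)=\min\{x,1-x\}=\operatorname{dist}(x,\partial\hat f)$ is the one-dimensional weight of the edge and $c_0=\diam(\omega_{\hat f})^{-1}$; in particular the datum $g:=\bigl(\pi_p\,\Phi_{\omega_{\hat f}}^{\alpha}\bigr)\big|_{\hat f}=c_0^{\alpha}\,\pi_p(\cdot,0)\,\Phi_{\hat f}^{\alpha}$ vanishes at both endpoints of $\hat f$ for $\alpha>0$, so its extension by zero to $\partial\hat K\setminus\hat f$ is compatible. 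I would then fix a Lipschitz profile $\theta_{\delta}\colon[0,1]\to[0,1]$ with $\theta_{\delta}(0)=1$, $\operatorname{supp}\theta_{\delta}\subset[0,\delta]$, $\|\theta_{\delta}\|_{L^2(0,1)}^2\le C\delta$ and $\|\theta_{\delta}'\|_{L^2(0,1)}^2\le C\delta^{-1}$ (for instance $\theta_{\delta}(y)=\max\{0,1-y/\delta\}$) and set $v_{\hat f}(x,y):=g(x)\,\theta_{\delta}(y)$, which satisfies $v_{\hat f}|_{\hat f}=g$ and vanishes on the other three sides of $\hat K$.

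The $L^2$ bound follows by separating variables: $\|v_{\hat f}\|_{L^2(\hat K)}^2=\|g\|_{L^2(\hat f)}^2\,\|\theta_{\delta}\|_{L^2(0,1)}^2\le C\delta\,\|\pi_p\Phi_{\omega_{\hat f}}^{\alpha}\|_{L^2(\hat f)}^2\le C_{\alpha}\,\delta\,\|\pi_p\Phi_{\omega_{\hat f}}^{\alpha/2}\|_{L^2(\hat f)}^2$, the last step because $\Phi_{\omega_{\hat f}}\le\tfrac12$ pointwise. For the gradient, separating variables again yields $\|\nabla v_{\hat f}\|_{L^2(\hat K)}^2=\|g'\|_{L^2(\hat f)}^2\,\|\theta_{\delta}\|_{L^2(0,1)}^2+\|g\|_{L^2(\hat f)}^2\,\|\theta_{\delta}'\|_{L^2(0,1)}^2$; the second summand is at most $C\delta^{-1}\|g\|_{L^2(\hat f)}^2\le C_{\alpha}\,\delta^{-1}\|\pi_p\Phi_{\omega_{\hat f}}^{\alpha/2}\|_{L^2(\hat f)}^2$, which is exactly the $\delta^{-1}$ term in the claim.

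For the first summand I would differentiate $g$ on each half of $\hat f$, where $\Phi_{\hat f}$ is smooth, so that $g'=c_0^{\alpha}\bigl(\Phi_{\hat f}^{\alpha}\,\partial_x\pi_p+\alpha\,\Phi_{\hat f}'\,\Phi_{\hat f}^{\alpha-1}\,\pi_p\bigr)$, and apply Lemma~\ref{inverse_ineq} on $\hat f$ (with $h_{\hat f}\simeq1$): to the first term, inequality \eqref{inv-equ-02} with exponent $\alpha$ gives $\|\Phi_{\hat f}^{\alpha}\,\partial_x\pi_p\|_{L^2(\hat f)}\le C\,p^{2-\alpha}\,\|\Phi_{\hat f}^{\alpha/2}\pi_p\|_{L^2(\hat f)}$; to the second, using $|\Phi_{\hat f}'|=1$ and inequality \eqref{inv-equ-01} with $a=\alpha-1\ge-1$, $b=\alpha/2$, gives $\|\Phi_{\hat f}^{\alpha-1}\pi_p\|_{L^2(\hat f)}\le C\,p^{1-\alpha/2}\,\|\Phi_{\hat f}^{\alpha/2}\pi_p\|_{L^2(\hat f)}$. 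Squaring and adding, using $2-\alpha\le2(2-\alpha)$, and passing from $\Phi_{\hat f}$ to $\Phi_{\omega_{\hat f}}$ on $\hat f$ (they differ there by the fixed factor $c_0$), I get $\|g'\|_{L^2(\hat f)}^2\le C_{\alpha}\,p^{2(2-\alpha)}\,\|\pi_p\Phi_{\omega_{\hat f}}^{\alpha/2}\|_{L^2(\hat f)}^2$; multiplying by $\|\theta_{\delta}\|_{L^2}^2\le C\delta$ produces the $\delta\,p^{2(2-\alpha)}$ term, and adding the two contributions gives the asserted gradient bound.

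I expect the main obstacle to be the $H^1$-conformity of $v_{\hat f}$ near the two endpoints of $\hat f$ when $\alpha$ is small: there $g(x)\sim x^{\alpha}$ has tangential derivative of size $x^{\alpha-1}$, which fails to be square-integrable once $\alpha\le\tfrac12$, so the plain tensor-product lifting is admissible only for $\alpha\in(\tfrac12,1]$. For the full range $\alpha\in(0,1]$ one must replace $\theta_{\delta}(y)$, in $O(\delta)$-sized neighbourhoods of the endpoints, by an anisotropic cutoff whose normal extent degenerates like $\delta\,\Phi_{\hat f}(x)$ (equivalently, a lifting refined dyadically toward the corner), which restores $v_{\hat f}\in H^1_0(\hat K)$; carrying the above estimates through this corner layer --- again only via Lemma~\ref{inverse_ineq}, and checking that no weighted inverse inequality is applied to an a~priori infinite quantity --- is the technical heart of the argument. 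A minor point is the consistent bookkeeping between the patch weight $\Phi_{\omega_{\hat f}}$ and the one-dimensional edge weight $\Phi_{\hat f}$, which are comparable on $\hat f$ but live on different sets.
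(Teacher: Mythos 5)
First, note that the paper does not actually prove this lemma: its ``proof'' is a citation to \cite[Lemma 2.6]{Melenk2001}, so your attempt is being measured against the literature argument rather than anything in the text. Your strategy --- tensor-product lifting $v_{\hat f}(x,y)=g(x)\theta_\delta(y)$ with a cutoff of width $\delta$, the normal derivative producing $\delta^{-1}$ and the tangential derivative producing $\delta\,p^{2(2-\alpha)}$ via the weighted inverse inequalities --- is exactly the standard route, and your computations for $\alpha\in(\tfrac12,1]$ check out: the separation of variables for the $L^2$ and normal-derivative terms is clean, the identification $\Phi_{\omega_{\hat f}}|_{\hat f}\simeq\Phi_{\hat f}$ is correct, and the product-rule split of $g'$ together with \eqref{inv-equ-01}--\eqref{inv-equ-02} does yield the dominant power $p^{2(2-\alpha)}$.

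The genuine gap is the one you flag yourself and then leave unresolved: for $\alpha\le\tfrac12$ the tangential derivative of $g\sim\Phi_{\hat f}^{\alpha}\pi_p$ behaves like $\operatorname{dist}^{\alpha-1}$ near the endpoints of $\hat f$, so $\|g'\|_{L^2(\hat f)}$ is infinite (logarithmically divergent already at $\alpha=\tfrac12$) whenever $\pi_p$ does not vanish there, and the plain tensor-product lifting is simply not in $H^1(\hat K)$. Applying \eqref{inv-equ-01} with $a=\alpha-1$ to an a priori infinite left-hand side cannot rescue this. Since the lemma is stated for all $\alpha\in[0,1]$, and the paper's own Lemma~\ref{lemma_02} invokes it with exponent $\beta=\tfrac{1+\alpha}{2}$, which equals $\tfrac12$ exactly when $\alpha=0$, the low-exponent regime is not a removable corner case. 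The fix you gesture at --- an anisotropic cutoff whose normal extent shrinks like $\delta\,\Phi_{\hat f}(x)$ toward the corners, i.e.\ a corner-adapted lifting --- is indeed what the cited proof does, but carrying the $L^2$ and gradient estimates through that corner layer (and verifying that the $\delta^{-1}$ and $\delta\,p^{2(2-\alpha)}$ scalings survive the extra Jacobian factors) is the technical substance of Melenk and Wohlmuth's Lemma~2.6, and it is absent here. A separate small point: at $\alpha=0$ the datum $\pi_p$ need not vanish at the endpoints of $\hat f$ at all, so even the compatibility of $v_{\hat f}|_{\hat f}$ with $v_{\hat f}|_{\partial\hat K\setminus\hat f}=0$ requires the corner-layer construction rather than extension by zero. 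As it stands, your write-up is a correct proof of the lemma restricted to $\alpha\in(\tfrac12,1]$ plus an accurate diagnosis of what remains to be done for the rest of the range.
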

\begin{proof}
	See \cite[Lemma 2.6]{Melenk2001}.
\end{proof}

\section{A posteriori error estimation} \label{sec: Error Est}
A posteriori error estimates assess the error between the exact solution $[u,\varrho]\in\calH$ and its
finite element approximation
$\left[u_{\FE},\varrho_{\FE}\right]\in\calV^p(\calT)$ only in terms of
known quantities \cite{ Ern2013, Babuska1978, Verfurth1989} -- i.e., the problem
data and the approximate solution. We call a functional $\eta\left(u_{\FE},\varrho_{\FE},f\right)$ an \textit{a posteriori error estimator for the Stokes equation}, if and only if there exists a constant $C>0$ such that
  \begin{equation}\label{apost_ee}
  \left\Vert\nabla\left(u-u_{\FE}\right)\right\Vert_\Omega+\left\Vert\varrho-\varrho_{\FE}\right\Vert_\Omega\le C\eta\left(u_{\FE},\varrho_{\FE},f\right).
  \end{equation}
  Furthermore, if $\eta\left(u_{\FE},\varrho_{\FE},f\right)$ can be decomposed into localized quantities $\eta_K\left(u_{\FE},\varrho_{\FE},f\right)$, $K\in\calT$, such that
  \begin{equation}\label{global_est}
  \eta(u_{\FE},\varrho_{\FE},f)^2=\sum_{K\in\calT}\eta_{K}\left(u_{\FE},\varrho_{\FE},f\right)^2,
  \end{equation}
  then $\eta_K\left(u_{\FE},\varrho_{\FE},f\right)$ is called a \textit{local error indicator}.

Estimate (\ref{apost_ee}) is usually called a ``reliability estimate'' since it guarantees that the error is controlled by the error estimator $\eta\left(u_{\FE},\varrho_{\FE},f\right)$ up to a constant independent of mesh size $h$ and polynomial degree $p$.
Further, the local error indicators $\eta_K\left(u_{\FE},\varrho_{\FE},f\right)$ provides the basis for adaptive 
mesh refinement by identifying those cells $K\in\calT$ where the error
is large and that, consequently, should be refined locally. This
procedure is then repeated 
until
$\eta\left(u_{\FE},\varrho_{\FE},f\right)$ is smaller than a
prescribed tolerance.

Computational efficiency requires that the $\eta_K$ also satisfy some efficiency property guaranteeing that the 
upper bound (\ref{apost_ee}) is sharp and does not asymptotically
overestimate the true error. To this end, we would like to derive a
local lower bound for the energy error for every cell $K\in\calT$:
\begin{equation}
\eta_K\left(u_{\FE},\varrho_{\FE},f\right)\le C\left(\left\Vert\nabla\left(u-u_{\FE}\right)\right\Vert^2_{\omega_K}+\left\Vert\varrho-\varrho_{\FE}\right\Vert^2_{\omega_K}\right)^{1/2}.
\end{equation}

\subsection{Residual-based a posteriori error analysis} \label{sec: estimator-Err Analysis}
Let us now define a residual-based a posteriori error estimator for problem (\ref{stocks}), and derive upper and lower bounds for it
in terms of the energy error of the approximated solution. In the
spirit of \cite{Melenk2001}, we define a family of error estimators
$\eta_{\alpha}$, $\alpha \in [0,1]$.
This estimator is local, i.e.,
$\eta_\alpha^2:=\sum_{K\in\calT}\eta_{\alpha;K}^2$ and can be decomposed into cell and interface contributions:
\begin{align}
\label{residual_boun}
\eta_{\alpha;K}^2&:=\eta_{\alpha;K;R}^2+\eta_{\alpha;K;B}^2,
\\
\label{residual_term}
\eta_{\alpha;K;R}^2&:=\frac{h_K^2}{p_K^2}\left\Vert\left(I_{p_K}^Kf+\nu\Delta
u_{\FE}-\nabla\varrho_{\FE}\right)\Phi_K^{\frac\alpha2}\right\Vert^2_K+\left\Vert\left(\nabla\cdot
u_{\FE}\right)\Phi_K^{\frac\alpha2}\right\Vert^2_K,
\\
\label{bound_term-chap-3}
\eta_{\alpha;K;B}^2&:= \sum_{f\in\cal F(K)} \frac{h_f}{2 p_f}  \left\Vert \left[ \nu\frac{\partial u_{\FE}}{\partial n_K} \right] \Phi_{\omega_f}^{\frac\alpha2}\right\Vert_f^2.
\end{align}
Here, $I_{p_K}^K f$ denotes the local $L^2$-projection of $f$ into the
space of piecewise polynomials of degree $p_K$. Furthermore,
$h_f:=\text{diam}(f)$ and $p_f:=\max(p_{K}, p_{K'})$ for a face $f$
that is shared by cells $K$ and $K'$. Finally, $[\cdot]$ denotes the jump
of a quantity across a face whose outward normal relative to $K$ is
indicates by $n_K$.

In the following, we will first derive an upper bound for
the energy error in terms of the estimator $\eta_\alpha$, i.e., state a
reliability estimate. 

\begin{mythm}[Reliability]\label{relibi_error_est}
  Let $[u,\varrho]\in\calH$ and 
  $\left[u_{\FE},\varrho_{\FE}\right]\in\calV^p(\calT)$
  be the solutions of (\ref{weak}) and (\ref{discrete_weak}), respectively.
  Further, let $ \alpha\in[0,1]$ and assume that triangulation $\calT$ is $\left(\gamma_h,\gamma_p\right)$-regular. 
  Then there exists
  a constant $C_{\text{rel}}>0$ independent of mesh size vector $h$ and polynomial degree vector $p$ such that
  \begin{equation*}
    \left\Vert\nabla\left(u-u_{\FE}\right)\right\Vert_\Omega^2+\left\Vert\varrho-\varrho_{\FE}\right\Vert_\Omega^2\leq C_{\text{rel}}\sum_{K\in\calT}\left(p_K^{2\alpha}\eta_{\alpha;K}^2+\frac{h_K^2}{p_K^2}\left\Vert I_{p_K}^{K}f-f\right\Vert^2_K\right).
  \end{equation*}
  In particular, the statement provides a $p$-independent reliability bound for $\alpha=0$.
\end{mythm}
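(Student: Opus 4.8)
The plan is to exploit the continuous inf-sup (Babu\v{s}ka--Brezzi) stability of $\mathcal L$, since the Stokes operator is not coercive and the error cannot be controlled by testing the error equation against the error itself. Write $e_u:=u-u_{\FE}$ and $e_\varrho:=\varrho-\varrho_{\FE}$. Because $[e_u,e_\varrho]\in\calH$, the continuous inf-sup condition following \eqref{weak} gives
\[
  \kappa\,\bigl(\|\nabla e_u\|_\Omega+\|e_\varrho\|_\Omega\bigr)
  \le \sup_{0\neq[v,q]\in\calH}
  \frac{\mathcal L([e_u,e_\varrho];[v,q])}{\|\nabla v\|_\Omega+\|q\|_\Omega},
\]
so it suffices to bound the residual functional $[v,q]\mapsto\mathcal L([e_u,e_\varrho];[v,q])$ by the square root of the asserted sum times $\|\nabla v\|_\Omega+\|q\|_\Omega$. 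Fix $[v,q]\in\calH$. Using \eqref{weak}, \eqref{discrete_weak} and Galerkin orthogonality \eqref{Galerkin-Orthogonality} with the discrete test pair $[\Pi^{hp}v,0]\in\calV^p(\calT)$, I would replace $v$ by $v-\Pi^{hp}v$ in the velocity slot while keeping $q$, and expand $\mathcal L([u_{\FE},\varrho_{\FE}];\cdot)$ via \eqref{bilin}.

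The next step is element-wise integration by parts in the two velocity terms. Adding and subtracting the local $L^2$-projection $I_{p_K}^Kf$ produces the interior cell residual $r_K:=I_{p_K}^Kf+\nu\Delta u_{\FE}-\nabla\varrho_{\FE}\in\calQ_{p_K}(K)^2$, the data-oscillation remainder $f-I_{p_K}^Kf$, the incompressibility residual $\nabla\!\cdot u_{\FE}$, and, on every interior face, the normal-flux jump $[\nu\,\partial u_{\FE}/\partial n_K]$; the face contributions carrying $\varrho_{\FE}$ drop out because the Taylor--Hood pressure space is $H^1$-conforming, and the boundary faces contribute nothing since $v-\Pi^{hp}v$ vanishes on $\Gamma$ (Theorem \ref{h1interpol_thm}). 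This yields the representation
\[
  \mathcal L([e_u,e_\varrho];[v,q])
  =\sum_{K\in\calT}\!\bigl(f+\nu\Delta u_{\FE}-\nabla\varrho_{\FE},\,v-\Pi^{hp}v\bigr)_K
  -\sum_{f\ \text{interior}}\!\bigl\langle[\nu\,\partial u_{\FE}/\partial n_K],\,v-\Pi^{hp}v\bigr\rangle_f
  +\sum_{K\in\calT}(\nabla\!\cdot u_{\FE},\,q)_K .
\]

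Then each term is estimated in the standard residual fashion, with the $hp$ tools replacing the classical ones. For the cell terms I would apply Cauchy--Schwarz, bound $\|v-\Pi^{hp}v\|_K$ by $C\,\tfrac{h_K}{p_K}\|\nabla v\|_{\omega_K}$ via \eqref{clem_1}, and then use the polynomial inverse estimate \eqref{inv-equ-01} with $a=0$, $b=\alpha/2$ on $r_K$ and on $\nabla\!\cdot u_{\FE}$ to turn $\|\cdot\|_K$ into the weighted norms of \eqref{residual_term}, at the price of a factor $p_K^{\alpha/2}$; the non-polynomial remainder $f-I_{p_K}^Kf$ is kept untouched and becomes the oscillation term $\tfrac{h_K^2}{p_K^2}\|I_{p_K}^Kf-f\|_K^2$. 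For the face terms, \eqref{clem_2} bounds $\|v-\Pi^{hp}v\|_f$ by $C\sqrt{h_f/p_f}\,\|\nabla v\|_{\omega_f}$, and a one-dimensional edge inverse estimate analogous to \eqref{inv-equ-01} introduces the weight $\Phi_{\omega_f}^{\alpha/2}$ of \eqref{bound_term-chap-3} at the price of $p_f^{\alpha/2}$. Finally, $(\gamma_h,\gamma_p)$-regularity \eqref{hp-regularity} lets one replace $h_f,p_f$ by $h_K,p_K$ up to constants (with the factor $\tfrac12$ in $\eta_{\alpha;K;B}^2$ absorbing the double counting of interior faces), the bounded overlap of the patches $\omega_K$, $\omega_f$ gives $\sum_K\|\nabla v\|_{\omega_K}^2\le C\|\nabla v\|_\Omega^2$, and a discrete Cauchy--Schwarz over cells --- absorbing $p_K^{\alpha}\le p_K^{2\alpha}$ --- produces $\mathcal L([e_u,e_\varrho];[v,q])\le C\bigl(\sum_{K\in\calT}(p_K^{2\alpha}\eta_{\alpha;K}^2+\tfrac{h_K^2}{p_K^2}\|I_{p_K}^Kf-f\|_K^2)\bigr)^{1/2}(\|\nabla v\|_\Omega+\|q\|_\Omega)$. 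Dividing, taking the supremum, combining with the inf-sup inequality and squaring (using $\|\nabla e_u\|_\Omega^2+\|e_\varrho\|_\Omega^2\le(\|\nabla e_u\|_\Omega+\|e_\varrho\|_\Omega)^2$) gives the claim with $C_{\mathrm{rel}}=C^2/\kappa^2$; for $\alpha=0$ every weight is $1$ and no positive power of $p_K$ survives, yielding the $p$-independent bound.

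I expect the genuine difficulty to lie in the first step rather than in the subsequent bookkeeping: the loss of coercivity forces the whole argument through the continuous inf-sup stability of $\mathcal L$ on the full pair space $\calH$, which is the essential departure from the Laplace $hp$-analysis of \cite{Melenk2001,Buerg_Conv}. Once the residual functional is in hand, the element-wise integration by parts (with the pressure face terms correctly accounted for, which is why the estimator carries no pressure jump) and the weight/patch bookkeeping are technical but follow the established $h$-version template.
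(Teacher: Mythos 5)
Your proposal is correct and follows essentially the same route as the paper: Galerkin orthogonality with the Cl\'ement interpolant, element-wise integration by parts, the interpolation bounds of Theorem~\ref{h1interpol_thm}, the continuous inf-sup condition, and the inverse estimate \eqref{inv-equ-01} to pass to the weighted residuals at the cost of a power of $p_K$. The only (cosmetic) difference is that the paper first establishes the unweighted $\alpha=0$ bound and then converts $\eta_{0;K}$ into $p_K^{2\alpha}\eta_{\alpha;K}$, whereas you insert the weights inline with $a=0$, $b=\alpha/2$, which in fact yields the slightly sharper factor $p_K^{\alpha}$ before being absorbed into $p_K^{2\alpha}$.
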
 

\begin{proof}
Set $e_{\FE}:=u-u_{\FE}$ and $\epsilon_{\FE}:=\varrho-\varrho_{\FE}$. From \eqref{Galerkin-Orthogonality}, we have
\begin{align*}
  \mathcal{L}\left(\left[e_{\FE},\epsilon_{\FE}\right];[v,q]\right)
  &= \left(\nu\nabla
  e_{\FE},\nabla\left(v-\Pi^{hp}v\right)\right)_\Omega
  \\
  & \qquad
  -\left(\epsilon_{\FE},\nabla\cdot\left(v-\Pi^{hp}v\right)\right)_\Omega
  - \left(\nabla\cdot e_{\FE},q\right)_\Omega
  \\
  &= \sum_{K\in\calT}\bigg(\left(\nu\nabla
  e_{\FE},\nabla\left(v-\Pi^{hp}v\right)\right)_K  
  \\
  & \qquad\qquad
  - \left(\epsilon_{\FE},\nabla\cdot\left(v-\Pi^{hp}v\right)\right)_K 
  - \left(\nabla\cdot e_{\FE},q\right)_K\bigg),
\end{align*}
where $\Pi^{hp}:H_0^1(\Omega)^2\to \mathcal{V}^p(\mathcal{T})$ is the $H^1$-conforming interpolation operator from Theorem \ref{h1interpol_thm}. Using integration by parts
and the incompressibility condition $\nabla\cdot u=0$ yields
\begin{multline*}
    \mathcal{L}\left(\left[e_{\FE},\epsilon_{\FE}\right];[v,q]\right)
    =\sum_{K\in\calT}\Bigg(\left(f+\nu\Delta
    u_{\FE}-\nabla\varrho_{\FE},v-\Pi^{hp}v\right)_{K}
    \\
    -\left(\nabla\cdot u_{\FE},q\right)_{K}
    + \sum_{f\in\cal F(K)}\left(\left[\nu\frac{\partial u_{\FE}}{\partial n}\right],v-\Pi^{hp}v\right)_f\Bigg).
\end{multline*}
The continuous Cauchy-Schwarz inequality then results in the estimate
\begin{multline*} 
    \mathcal{L}\left(\left[e_{\FE},\epsilon_{\FE}\right];[v, q]\right)
    \le\sum_{K\in\calT}\Bigg(\left\Vert I_{p_K}^Kf+\nu\Delta
    u_{\FE}-\nabla\varrho_{\FE}\right\Vert_K\left\Vert
    v-\Pi^{hp}v\right\Vert_K 
    \\
    \qquad\qquad\qquad
    +\left\Vert\nabla\cdot u_{\FE}\right\Vert_K\Vert q\Vert_K
    +\left \Vert f-I_{p_K}^Kf\right\Vert_K\left\Vert
    v-\Pi^{hp}v\right\Vert_K
    \\
    +\sum_{f\in\cal F(K)}\left\Vert\left[\nu\frac{\partial u_{\FE}}{\partial n_K}\right]\right\Vert_f\left\Vert v-\Pi^{hp}v\right\Vert_f\Bigg).
\end{multline*}
Theorem \ref{h1interpol_thm} allows us to locally bound the
differences $v-\Pi^{hp}v$. This yields
\begin{equation*}
  \begin{split}
    \mathcal{L}\left(\left[e_{\FE},\epsilon_{\FE}\right];[v, q]\right)& \le C\sum_{K\in\calT}\Bigg(\frac{h_K}{p_K}\left\Vert I_{p_K}^Kf+\nu\Delta u_{\FE}-\nabla\varrho_{\FE}\right\Vert_K \\&+\left\Vert\nabla\cdot u_{\FE}\right\Vert_K  +\frac{h_K}{p_K}\left\Vert f-I_{p_K}^Kf\right\Vert_K
    \quad \\& + \sum_{f\in\cal F(K)}\sqrt{\frac{h_f}{p_f}}\left\Vert\left[\nu\frac{\partial u_{\FE}}{\partial n_K}\right]\right\Vert_f\Bigg)\left(\Vert\nabla v\Vert_{\omega_K}+\Vert q\Vert_K\right),
  \end{split}
\end{equation*}
which we can further estimate as follows:
\begin{multline*}
    \mathcal{L}\left(\left[e_{\FE},\epsilon_{\FE}\right];[v, q]\right)
    \\ \le
    C\left(\sum_{K\in\calT}\left(\eta_{0;K}^2+\frac{h_K^2}{p_K^2}\left\Vert
    f-I_{p_K}^Kf\right\Vert_K^2\right)\right)^{\frac12}\left(\Vert\nabla
    v\Vert_\Omega^2+\Vert q\Vert_\Omega^2\right)^{\frac12}
\end{multline*}
for some constant $C>0$ independent of mesh size vector $h$ and polynomial degree vector $p$. Moreover, for $(e_{\FE}, \varepsilon_{\FE}) \in \mathcal{H} $
we have
\begin{equation*}
  \left(\left\Vert\nabla
  e_{\FE}\right\Vert_\Omega^2+\left\Vert\epsilon_{\FE}\right\Vert_\Omega^2\right)^{\frac12}
  \leq C \sup_{[v,q]\in\calH}\frac{\mathcal{L}\left(\left[e_{\FE},\epsilon_{\FE}\right];[v,q]\right)}{(\Vert\nabla v\Vert_\Omega^2+\Vert q\Vert_\Omega^2)^{\frac12}},
\end{equation*}
  for some constant $C>0$. This implies the claimed result for $\alpha=0$. Using the inverse estimates given in Lemma \ref{inverse_ineq}, we can 
bound $\eta_{0; K}$ in terms of $\eta_{\alpha; K}$ for $\alpha\in(0,1]$ from above. Therefore, setting $a:=0$ and $b:=\alpha$ in Lemma \ref{inverse_ineq} and we get
\begin{equation*}
  \left(\left\Vert\nabla e_{\FE}\right\Vert_\Omega^2+\left\Vert\epsilon_{\FE}\right\Vert_\Omega^2\right)^{\frac12} \le C_{rel}\left(\sum_{K\in\calT}\left(p_K^{2\alpha}\eta_{\alpha;K}^2+\frac{h_K^2}{p_K^2}\left\Vert f-I_{p_K}^Kf\right\Vert_K^2\right)\right)^{\frac12}
\end{equation*}
which concludes the proof.
\end{proof}

Next, we derive an upper bound for the a posteriori error estimator $\eta_{\alpha; K}$ in terms of 
the energy error $\left\Vert\nabla\left(u-u_{\FE}\right)\right\Vert_{\omega_K}^2+\left\Vert\varrho-\varrho_{\FE}\right\Vert_{\omega_K}^2$ defined on 
the patch $\omega_K$ around cell $K$. Under mild assumptions on
the mesh, this then constitutes an efficiency estimate for the error
estimator. We will first consider the residual and jump terms
$\eta_{\alpha;K;R}, \eta_{\alpha;K;B}$ 
separately and combine the derived efficiency estimates later to
obtain an upper bound for the residual-based a posteriori error
estimator from definition (\ref{residual_boun}).

\begin{mylem}\label{lemma1}
  Let $[u,\varrho]\in\calH$,
  $\left[u_{\FE},\varrho_{\FE}\right]\in\calV^p(\calT)$, and $\calT$
  as in Theorem~\ref{relibi_error_est}, and $\alpha\in[0,1]$ be
  arbitrary. Then, there exists a constant $C>0$ independent of the
  mesh size vector $h$ and polynomial degree vector $p$ so that
  \begin{multline*}
    \eta_{\alpha;K;R}^2\leq C \bigg(p_{K}^{2(1-\alpha)}\left(\nu^2\left\Vert\nabla\left(u-u_{\FE}\right)\right\Vert_K^2+\left\Vert\varrho-\varrho_{\FE}\right\Vert^2_K\right) \\ +\frac{h_K^{2+\frac\alpha2}}{p_K^{1+\alpha}}\left\Vert f-I^K_{p_K}f\right\Vert^2_K \bigg).
  \end{multline*}
  In particular, the statement provides a $p$-independent efficiency bound
  of the cell-residual term for $\alpha=1$.
\end{mylem}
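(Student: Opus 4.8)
The strategy is the classical Verfürth bubble-function argument, adapted to the $hp$ setting using the polynomial smoothing weight $\Phi_K$ and the inverse estimates of Lemma~\ref{inverse_ineq}. Write $R_K := I_{p_K}^K f + \nu\Delta u_{\FE} - \nabla\varrho_{\FE}$ for the (projected) interior residual and recall that $-\nu\Delta u + \nabla\varrho = f$ and $\nabla\cdot u = 0$ hold in $\Omega$. The two contributions to $\eta_{\alpha;K;R}^2$ — the momentum residual term $\tfrac{h_K^2}{p_K^2}\|R_K\Phi_K^{\alpha/2}\|_K^2$ and the divergence term $\|(\nabla\cdot u_{\FE})\Phi_K^{\alpha/2}\|_K^2$ — will be handled separately.

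For the divergence term, since $\nabla\cdot u = 0$ we have $\nabla\cdot u_{\FE} = -\nabla\cdot(u - u_{\FE})$ pointwise, so $\|(\nabla\cdot u_{\FE})\Phi_K^{\alpha/2}\|_K \le \|\nabla\cdot(u-u_{\FE})\|_K \le \sqrt{2}\,\|\nabla(u-u_{\FE})\|_K$ using $0\le\Phi_K\le 1$; this already gives the claimed bound with no $p$-power, so it is absorbed into the $p_K^{2(1-\alpha)}$ factor (which is $\ge 1$ for $\alpha\le 1$). For the momentum term I would set $\pi_p := R_K \in \calQ_{p_K-1}(K)^2$ (it is a polynomial since $I_{p_K}^K f$, $\Delta u_{\FE}$ and $\nabla\varrho_{\FE}$ all are) and test the weak residual equation against the bubble-type function $v := R_K\,\Phi_K^{\alpha} \in H_0^1(K)^2$, extended by zero to $\Omega$. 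On one hand, integration by parts on $K$ (the boundary terms vanish because $v|_{\partial K}=0$) gives
\begin{equation*}
  \int_K R_K\cdot v = \int_K \big(\nu\nabla(u-u_{\FE}):\nabla v - (\varrho-\varrho_{\FE})\,\nabla\cdot v\big) + \int_K (f - I_{p_K}^K f)\cdot v,
\end{equation*}
so by Cauchy–Schwarz the left side is bounded by $C\big(\nu\|\nabla(u-u_{\FE})\|_K + \|\varrho-\varrho_{\FE}\|_K + \|f-I_{p_K}^Kf\|_K\big)\|\nabla v\|_K + \|f-I_{p_K}^Kf\|_K\|v\|_K$. On the other hand the left side equals $\|R_K\Phi_K^{\alpha/2}\|_K^2$. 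It remains to estimate $\|\nabla v\|_K$ and $\|v\|_K$: by \eqref{inv-equ-02} with $\delta=\alpha$, $\|\nabla v\|_K = \|\nabla(R_K\Phi_K^{\alpha})\|_K \le C p_K^{2-\alpha} h_K^{-1}\|R_K\Phi_K^{\alpha/2}\|_K$, and by $0\le\Phi_K\le1$, $\|v\|_K\le\|R_K\Phi_K^{\alpha/2}\|_K$. Substituting, dividing through by $\|R_K\Phi_K^{\alpha/2}\|_K$, and multiplying by $\tfrac{h_K^2}{p_K^2}$ yields, after squaring and using Young's inequality,
\begin{equation*}
  \frac{h_K^2}{p_K^2}\|R_K\Phi_K^{\alpha/2}\|_K^2 \le C\Big(p_K^{2(1-\alpha)}\big(\nu^2\|\nabla(u-u_{\FE})\|_K^2 + \|\varrho-\varrho_{\FE}\|_K^2\big) + \frac{h_K^2}{p_K^2}\|f-I_{p_K}^Kf\|_K^2\Big),
\end{equation*}
where the data-oscillation term comes with the harmless extra smallness $h_K^2/p_K^2 \le h_K^{2+\alpha/2} p_K^{-1-\alpha}$ once one tracks the weaker power carefully (or one simply accepts the slightly larger stated bound $h_K^{2+\alpha/2}p_K^{-1-\alpha}$, which dominates $h_K^2/p_K^2$ for $h_K\lesssim 1$ and $\alpha\ge0$ — a scaling that can be arranged by absorbing $h_K^{\alpha/2}$ and a power of $p_K$ via the crude bound $\|v\|_K \le C h_K^{\alpha/4} p_K^{?}\,\|R_K\Phi_K^{\alpha/2}\|_K$ together with \eqref{inv-equ-01}).

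The main obstacle — and the point to be most careful about — is the bookkeeping of $p_K$-powers in the data-oscillation term so that the exponent matches the asserted $h_K^{2+\alpha/2}p_K^{-1-\alpha}$ exactly; the cleanest route is to bound $\|v\|_K = \|R_K\Phi_K^\alpha\|_K$ not by $\|R_K\Phi_K^{\alpha/2}\|_K$ but, via \eqref{inv-equ-01} applied with the roles of $a,b$ chosen to introduce the $\Phi_K^{\alpha/4}$ weight, producing the fractional $h_K$-power after a standard scaling to the reference cell, and then to use Young's inequality on the cross term $\|f-I_{p_K}^Kf\|_K\|v\|_K$ so the "bad" factors land on the oscillation term rather than on the energy error. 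Everything else — the integration by parts, the vanishing of boundary terms, the use of $\nabla\cdot u=0$, and the final Young's-inequality absorption — is routine.
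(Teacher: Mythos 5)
Your skeleton is the same as the paper's proof (weighted bubble test function $v=R_K\Phi_K^{\alpha}$, integration by parts against the weak form, inverse estimates to control $\|\nabla v\|_K$, and the elementary treatment of the divergence term), but there are genuine gaps in the two places where the $hp$ argument actually differs from the classical $h$-version. First, the step $\|\nabla(R_K\Phi_K^{\alpha})\|_K\le Cp_K^{2-\alpha}h_K^{-1}\|R_K\Phi_K^{\alpha/2}\|_K$ is not a direct application of \eqref{inv-equ-02}: that estimate controls $(\nabla\pi_p)\Phi_K^{\delta}$, not the gradient of the product. You must expand by the product rule, which produces the extra term $\|R_K\Phi_K^{\alpha-1}\nabla\Phi_K\|_K$ with $|\nabla\Phi_K|\sim h_K^{-1}$; controlling this by $\|R_K\Phi_K^{\alpha/2}\|_K$ requires \eqref{inv-equ-01} with $a=2(\alpha-1)$, $b=\alpha$, and the hypothesis $a\ge -1$ forces $\alpha\ge\tfrac12$. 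Second, and consequently, your argument says nothing about $0\le\alpha<\tfrac12$; at $\alpha=0$ it fails outright, since $v=R_K$ does not vanish on $\partial K$ and is not an admissible $H_0^1(K)$ test function. The paper closes this range by a bootstrap: set $\beta:=\tfrac{1+\alpha}{2}\ge\tfrac12$, use \eqref{inv-equ-01} to bound the $\alpha$-weighted residual norm by $p_K^{\beta-\alpha}$ times the $\beta$-weighted one, and then invoke the already-established case $\beta>\tfrac12$, the extra $p$-power being absorbed into $p_K^{2(1-\alpha)}$. Without some such device your proof covers only half the stated range of $\alpha$.

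A lesser but still real defect is the data-oscillation term: you leave a literal ``$p_K^{?}$'' in your bound for $\|v\|_K$ and defer the exponent bookkeeping to an unspecified scaling argument. The paper sidesteps this entirely by keeping the weight attached to the oscillation factor from the start, estimating the pairing of $I_{p_K}^Kf-f$ with the test function by $\|(I_{p_K}^Kf-f)\Phi_K^{\alpha/2}\|_K\,\|R_K\Phi_K^{\alpha/2}\|_K$, so that the second factor cancels against the left-hand side and no separate bound on $\|v\|_K$ is ever needed. Your treatment of $\eta_{\alpha;K;R_2}$ via $\nabla\cdot u=0$ and $\Phi_K\le C$ is fine and matches the paper's.
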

\begin{proof}
Let us write the residual-based term as
$
\eta_{\alpha;K; R}^2= \eta_{\alpha;K;R_1}^2+ \eta_{\alpha;K;R_2}^2,
$
with
\begin{equation}
  \label{eq_01}
  \begin{split}
    \eta_{\alpha;K;R_1}^2 & := \frac{h^2_K}{p^2_K}\left\Vert\left(I^K_{p_K}f+\nu\Delta u_{\FE}-\nabla\varrho_{\FE}\right)\Phi_K^{\frac\alpha2}\right\Vert^2_K,\\
    \eta_{\alpha;K;R_2}^2 & := \left\Vert\nabla\cdot u_{\FE}\Phi_K^{\frac\alpha2}\right\Vert^2_K.
  \end{split}
\end{equation}
Using the idea in \cite{Melenk2001} to build test functions, for $0<\alpha\le1$, we define the cell residual term $R_K$ as,  $R_K:=\left(I^K_{p_K}f+\nu\Delta u_{\FE}-\nabla\varrho_{\FE}\right)\Phi_K^{\alpha}\in H_0^1(K)$ and obtain
\begin{equation}
  \label{eq_02}
  \left\Vert R_K\Phi_{K}^{-\frac\alpha2}\right\Vert^2_K=\left(f+\nu\Delta u_{\FE}-\nabla\varrho_{\FE},R_K\right)_K+\left(I^K_{p_K}f-f,R_K\right)_K.
\end{equation}
With equation (\ref{weak}) and applying integration by parts, the first term reads
\begin{multline*}
  \left(f+\nu\Delta u_{\FE}-\nabla\varrho_{\FE},R_K\right)_K 
  \notag
  \\ 
  =\left(\nu\nabla\left(u-u_{\FE}\right),\nabla
  R_K\right)_K-\left(\varrho-\varrho_{\FE},\nabla\cdot R_K\right)_K 
  -\left(\nabla\cdot u,q\right)_K.
\end{multline*}
Inserting into (\ref{eq_02}) and using that $\nabla \cdot u=0$ implies
\begin{align}
  \left\Vert R_K\Phi_{K}^{-\frac\alpha2}\right\Vert^2_K & = \left(\nu\nabla\left(u-u_{\FE}\right),\nabla R_K\right)_K-\left(\varrho-\varrho_{\FE},\nabla\cdot R_K\right)_K \notag\\&\qquad +\left(I^K_{p_K}f-f,R_K\right)_K
  \notag
  \\
  & \le
  \bigg(\nu\left\Vert\nabla\left(u-u_{\FE}\right)\right\Vert_K+\left\Vert\varrho-\varrho_{\FE}\right\Vert_K
  \bigg) \left\Vert\nabla R_K\right\Vert_K 
  \notag
  \\
  \label{equ 001}
  &\qquad + \left\Vert\left(I^K_{p_K} f-f\right)\Phi_K^{\frac{\alpha}{2}}\right\Vert_K\left\Vert R_K\Phi_K^{-\frac\alpha2}\right\Vert_K.
\end{align}
Using equations (\ref{inv-equ-01}) and (\ref{inv-equ-02}) in Lemma \ref{inverse_ineq}, we can estimate
\begin{align*}
    \left\Vert\nabla R_K\right\Vert_K^2 &= \left\Vert\nabla\bigg(\left(I^K_{p_K}f+\nu\Delta u_{\FE}-\nabla\varrho_{\FE}\right)\Phi_K^{\alpha}\bigg)\right\Vert_K^2\\
    &\le 2 \left\Vert\nabla\left(I^K_{p_K}f+\nu\Delta u_{\FE}-\nabla\varrho_{\FE}\right)\Phi_{K}^{\alpha}\right\Vert^2_K \\&\qquad + 2 \left\Vert\left(I^K_{p_K}f+\nu\Delta u_{\FE}-\nabla\varrho_{\FE}\right)\Phi_K^{\alpha-1}\nabla\Phi_K\right\Vert^2_K
    \\& \le C\bigg( \frac{p_{K}^{2(2-\alpha)}}{h_K^2}\left\Vert R_K\Phi_K^{-\frac\alpha2}\right\Vert^2_K \\&\qquad + \frac{C}{h^2_K} \left\Vert\left(I^K_{p_K}f+\nu\Delta u_{\FE}-\nabla\varrho_{\FE}\right)^2\Phi_K^{2(\alpha-1)}\right\Vert_K\bigg),
\end{align*}
with some $C>0$ independent of $h$ and $p$. For the second of these
two terms, we have to distinguish between two cases. Assuming $\alpha > \frac{1}{2}$, we set $a:=2(\alpha-1)$ and $b:=\alpha$ in Lemma \ref{inverse_ineq} to get
\begin{equation*}
  \left\Vert\left(I^K_{p_K}f+\nu\Delta u_{\FE}-\nabla\varrho_{\FE}\right)\Phi_K^{\alpha-1}\right\Vert_K\le Cp_K^{1-\frac\alpha2}\left\Vert R_K\Phi_K^{-\frac\alpha2}\right\Vert_K
\end{equation*}
and inserting into the estimate above yields
\begin{equation} \label{used_in_goal_paper}
  \left\Vert\nabla R_K\right\Vert_K\le C\frac{p_{K}^{2-\alpha}}{h_K}\left\Vert R_K\Phi_K^{-\frac\alpha2}\right\Vert_K.
\end{equation}
Inequality (\ref{equ 001}) then reads as
\begin{multline*}
  \left\Vert R_K\Phi_{K}^{-\frac\alpha2}\right\Vert_K
  \\ \le C\frac{p_K^{2-\alpha}}{h_K}\bigg(\nu\left\Vert\nabla\left(u-u_{\FE}\right)\right\Vert_K+\left\Vert\varrho-\varrho_{\FE}\right\Vert_K\bigg)+h_K^{\frac\alpha2}\left\Vert I^K_{p_K} f-f\right\Vert_K,
\end{multline*}
and, after multiplying both sides by $\frac{h_K}{p_K}$ and using definition (\ref{eq_01}), we have
\begin{multline}\label{equ 004}
\eta_{\alpha;K;R_1} \leq
Cp_K^{1-\alpha}\bigg(\nu\left\Vert\nabla\left(u-u_{\FE}\right)\right\Vert_K+\left\Vert\varrho-\varrho_{\FE}\right\Vert_K\bigg)
\\+\frac{h_K^{1+\frac\alpha2}}{p_K}\left\Vert I^K_{p_K} f-f\right\Vert_K.
\end{multline}
Now, let us consider the case $0\le\alpha\le\frac12$. Let $\beta:=\frac{1+\alpha}2$. Again, using the smoothing estimates given in Lemma \ref{inverse_ineq} and considering the fact that $\beta>\alpha$, we find
\begin{equation*}
  \begin{split}
    \left\Vert R_K\Phi_K^{-\frac\alpha2}\right\Vert_K &\le C p_K^{\beta-\alpha}\left\Vert\left(I_{p}^{K}f+\nu\Delta u_{\FE}-\nabla\varrho_{\FE}\right)\Phi_K^{\frac\beta2}\right\Vert_K\\
    & = C\frac{p_K^{1+\beta-\alpha}}{h_K}\eta_{\beta;K;R_1}.
  \end{split}
\end{equation*}
Estimate (\ref{equ 004}) then implies
\begin{equation*}
\begin{split}
  \left\Vert R_K\Phi_K^{-\frac\alpha2}\right\Vert_K\le C\bigg( &\frac{p_K^{2-\alpha}}{h_K}\left(\nu\left\Vert\nabla\left(u-u_{\FE}\right)\right\Vert_K+\left\Vert\varrho-\varrho_{\FE}\right\Vert_K\right)\\& +\frac{h_K^{\frac\beta2}}{p_K^{\alpha-\beta}}\left\Vert I^K_{p_K} f-f\right\Vert_K \bigg).
  \end{split}
\end{equation*}
Then, the definition of $\beta$ yields
\begin{equation}
        \begin{split}
  \eta_{\alpha;K;R_1}\le C\bigg( & p_K^{1-\alpha}\left(\nu\left\Vert\nabla\left(u-u_{\FE}\right)\right\Vert_K+\left\Vert\varrho-\varrho_{\FE}\right\Vert_K\right)\\& +\frac{h_K^{\frac{5+\alpha}4}}{p_K^{\frac{1+\alpha}2}}\left\Vert I^K_{p_K} f-f\right\Vert_K\bigg).
   \end{split}
\end{equation}
To obtain the upper bound for $\eta_{\alpha;K;R_2}^2$, we observe
\begin{equation*}
  \begin{split}
    \eta_{\alpha;K;R_2} = \left\Vert (\nabla\cdot u_{\FE} )\Phi_K^{\frac\alpha2}\right\Vert_K
    \le h_K^{\frac\alpha2}\left\Vert\nabla\cdot u_{\FE}\right\Vert_K.
  \end{split}
\end{equation*}
Since $\nabla\cdot u=0$, we have $\nabla\cdot u_{\FE}=\nabla\cdot\left(u-u_{\FE}\right)$ and, hence,
\begin{equation}
  \label{resid_02}
  \eta_{\alpha;K;R_2}\le h_K^{\frac\alpha2}\left\Vert\nabla\left(u-u_{\FE}\right)\right\Vert_K.
\end{equation}
Finally, combining estimates (\ref{equ 004}) and (\ref{resid_02}) gives the desired result.
\end{proof}

Similarly, we can derive matching estimates for the jump-based term $\eta_{\alpha ;K;B}$ in equation
(\ref{bound_term-chap-3}):
\begin{mylem}
  \label{lemma_02}
  Let $[u,\varrho]\in\calH$,
  $\left[u_{\FE},\varrho_{\FE}\right]\in\calV^p(\calT)$, and $\calT$
  as in Theorem~\ref{relibi_error_est}. Let $\alpha\in[0,1]$.
  Then, there exists some constant $C>0$ independent of mesh size vector $h$ and polynomial degree vector $p$ such that
  \begin{equation*}
  \begin{split}
    \eta_{\alpha ;K;B}^{2} \leq C\bigg( & p_K^{\frac{3-\alpha}2}\left(\nu^2\left\Vert\nabla\left(u-u_{\FE}\right)\right\Vert^2_{\omega_K}+\left\Vert\varrho-\varrho_{\FE}\right\Vert^2_{\omega_K}\right)\\& +\frac{h^2_K}{p_K^{\frac{3+\alpha}2}}\left\Vert I^K_{p_K}f-f\right\Vert_{\omega_K}^2\bigg).
    \end{split}
  \end{equation*}
\end{mylem}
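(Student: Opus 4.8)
The plan is to bound the jump term $\eta_{\alpha;K;B}^2 = \sum_{f\in\mathcal F(K)}\frac{h_f}{2p_f}\left\Vert\left[\nu\frac{\partial u_{\FE}}{\partial n_K}\right]\Phi_{\omega_f}^{\alpha/2}\right\Vert_f^2$ one face at a time, using an edge-bubble test function argument analogous to the cell-residual argument in Lemma~\ref{lemma1}, but now invoking Lemma~\ref{edge_inverseq} for the polynomial extension from a face into the adjacent cells. Fix an interior face $f\in\mathcal F(K)$ shared by $K$ and $K'$, and write $\omega_f=K\cup K'$. Let $J_f:=\left[\nu\frac{\partial u_{\FE}}{\partial n_K}\right]$, which is a polynomial on $f$. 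Apply Lemma~\ref{edge_inverseq} (pulled back to the reference cell via $T_K$, $T_{K'}$ and scaled) to obtain an extension $v_f\in H_0^1(\omega_f)^2$ with $v_f|_f = J_f\,\Phi_{\omega_f}^{\alpha}$, $v_f=0$ on $\partial\omega_f$, and the two norm estimates $\Vert v_f\Vert_{\omega_f}^2\le C\delta\,h_f\,\Vert J_f\Phi_{\omega_f}^{\alpha/2}\Vert_f^2$ (the extra $h_f$ from scaling) and $\Vert\nabla v_f\Vert_{\omega_f}^2\le C(\delta p_f^{2(2-\alpha)}+\delta^{-1})\Vert J_f\Phi_{\omega_f}^{\alpha/2}\Vert_f^2$, with $\delta\in(0,1]$ a free parameter to be optimized at the end.

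Next I would test the error equation with $[v_f,0]$. Since $v_f$ is supported in $\omega_f$ and vanishes on $\partial\omega_f$, integration by parts over $K$ and $K'$ produces exactly the face integral $\int_f J_f\cdot v_f$ plus cell-residual contributions: using \eqref{weak} and $\nabla\cdot u=0$,
\begin{equation*}
\int_f J_f\cdot v_f = \mathcal L([e_{\FE},\epsilon_{\FE}];[v_f,0]) - \sum_{L\in\{K,K'\}}\left(I_{p_L}^L f+\nu\Delta u_{\FE}-\nabla\varrho_{\FE}, v_f\right)_L - \sum_{L}\left(I_{p_L}^L f - f, v_f\right)_L,
\end{equation*}
so that, by Cauchy--Schwarz and the definition of $v_f|_f$,
\begin{equation*}
\left\Vert J_f\Phi_{\omega_f}^{\alpha/2}\right\Vert_f^2 \le \left(\nu\Vert\nabla e_{\FE}\Vert_{\omega_f}+\Vert\epsilon_{\FE}\Vert_{\omega_f}\right)\Vert\nabla v_f\Vert_{\omega_f} + \sum_{L}\left(\left\Vert R_L\Phi_L^{-\alpha/2}\right\Vert_L + \left\Vert(f-I_{p_L}^L f)\Phi_L^{\alpha/2}\right\Vert_L\right)\Vert v_f\Phi^{-\alpha/2}\Vert_{?}.
\end{equation*}
I would bound the cell-residual norms $\Vert R_L\Phi_L^{-\alpha/2}\Vert_L$ using the already-proven estimate \eqref{equ 004} from Lemma~\ref{lemma1} (taking the max over $L\in\{K,K'\}$ and using $(\gamma_h,\gamma_p)$-regularity to compare $h_L,p_L$ with $h_K,p_K$), and bound the volume factors $\Vert v_f\Vert$-type terms via the $\Vert v_f\Vert_{\omega_f}$ estimate together with inverse inequalities (Lemma~\ref{inverse_ineq}) to move between $v_f$ and $v_f\Phi^{-\alpha/2}$. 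Then divide through by $\Vert J_f\Phi_{\omega_f}^{\alpha/2}\Vert_f$, square, multiply by $\frac{h_f}{2p_f}$, and sum over $f\in\mathcal F(K)$; the number of faces is bounded by mesh regularity.

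The main obstacle — and the place where the exponent $p_K^{(3-\alpha)/2}$ comes from — is the optimization over $\delta$. After inserting the extension estimates, the dominant velocity/pressure contribution will have the shape $\frac{h_f}{p_f}\cdot(\nu\Vert\nabla e_{\FE}\Vert_{\omega_f}+\Vert\epsilon_{\FE}\Vert_{\omega_f})\cdot\left(\delta p_f^{2(2-\alpha)}+\delta^{-1}\right)^{1/2}\Vert J_f\Phi_{\omega_f}^{\alpha/2}\Vert_f$, while the cell-residual cross terms contribute factors like $p_f^{1-\alpha}(\cdots)$ times $\left(\delta h_f\right)^{1/2}$ scaled appropriately. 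One then applies Young's inequality to absorb $\Vert J_f\Phi_{\omega_f}^{\alpha/2}\Vert_f$ on the left, and chooses $\delta$ to balance the two $\delta$-dependent groups; the natural choice $\delta\sim p_f^{-(2-\alpha)}$ makes $\delta p_f^{2(2-\alpha)}+\delta^{-1}\sim p_f^{2-\alpha}$, and tracking the residual $h_f/p_f$ prefactor together with the $\delta$ from the $L^2$ bound then yields the stated $p_K^{(3-\alpha)/2}$ weight on the energy error and the $h_K^2/p_K^{(3+\alpha)/2}$ weight on the data-oscillation term. Care is needed to use $(\gamma_h,\gamma_p)$-regularity consistently so that all $h_f,p_f,h_L,p_L$ can be replaced by $h_K,p_K$ up to constants, and to handle the mild subtlety that $v_f$ must be assembled from two reference-cell extensions glued continuously across $f$.
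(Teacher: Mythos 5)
Your overall architecture is the same as the paper's: extend the jump $J_f\Phi_{\omega_f}^{\alpha}$ from the face into $\omega_f$ via Lemma~\ref{edge_inverseq}, test the error equation with that extension, integrate by parts to isolate the face integral plus cell-residual contributions, reuse Lemma~\ref{lemma1} for the latter, and optimize over the free parameter $\delta$. However, there are two concrete gaps at exactly the points you flag as delicate. First, the volume pairing you leave with a ``$?$'': you propose to control $\Vert v_f\Phi^{-\alpha/2}\Vert$ by ``inverse inequalities (Lemma~\ref{inverse_ineq}) to move between $v_f$ and $v_f\Phi^{-\alpha/2}$'', but Lemma~\ref{inverse_ineq} applies only to polynomials $\pi_p\in\calQ_p(K)$, and the extension $v_f$ is merely an $H_0^1(\omega_f)$ function; moreover $\Phi_K^{-\alpha/2}$ blows up on $\partial K\supset f$, precisely where $v_f$ does not vanish, so this weighted pairing is not available. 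The workable route (and the one the paper takes) is to pair the cell residual with $v_f$ in \emph{unweighted} $L^2$ norms, use the $L^2$ bound $\Vert v_f\Vert_{\omega_f}^2\le C\delta h_f\Vert J_f\Phi_{\omega_f}^{\alpha/2}\Vert_f^2$, and apply the polynomial inverse estimate of Lemma~\ref{inverse_ineq} to the \emph{polynomial} residual $I_{p_K}^Kf+\nu\Delta u_{\FE}-\nabla\varrho_{\FE}$ to reinsert the weight there.

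Second, once the pairing is done this way, the residual cross term scales like $\delta\,p_K^{3}$ (not $\delta\,p_K^{3-2\alpha}$ as in your sketch), and your proposed balance $\delta\sim p_f^{-(2-\alpha)}$ then yields $p_K^{1+\alpha}$ on the energy term, which exceeds the claimed $p_K^{(3-\alpha)/2}$ for $\alpha>1/3$. The paper instead takes $\delta=p_K^{-2}$, which gives an exponent of $1$ on the energy term --- but only in the regime $\alpha>\tfrac12$ where the intermediate estimate of Lemma~\ref{lemma1} is directly available. The stated exponent $\tfrac{3-\alpha}{2}$ actually originates from the case $0\le\alpha\le\tfrac12$, which the paper handles by the reduction $\eta_{\alpha;K;B}\le p_K^{\beta-\alpha}\eta_{\beta;K;B}$ with $\beta=\tfrac{1+\alpha}{2}$ (a polynomial inverse estimate on the face jump) followed by the $\alpha>\tfrac12$ result; your proposal omits this case distinction entirely, and without it the claimed exponent cannot be recovered from a single $\delta$-optimization.
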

\begin{proof}
 For a given element $K \in \calT$ and an interior face $f \in \cal F (K)$, there
  exists some $K_1 \in \calT$ such that $f= \partial K \cap\partial
  K_1$ and a face patch $\omega_f$ as given in
  (\ref{patch-face}). Moreover, by Lemma \ref{inverse_ineq} there exists an
  extension function $R_f \in H_0^{1}(\omega_f)$ such that $R_{f}
  \vert_{f}=\left[ \nu\frac{\partial u_{\FE}}{\partial
  n}\right]\Phi_{\omega_f}^{\alpha}$ that is continuous on $K$,
  vanishes on $\partial \omega_{f}$, and can be extended by zero to
  all of $\Omega$. Thus, we can consider $R_{f} \in H_0^1(\Omega)$. Now, to derive an upper bound for the jump-based term $\eta_{\alpha ;K;B}^{2}$, we use integration by parts to get
\begin{equation*}
\begin{split}
\left\Vert R_f \Phi_{\omega_{f}}^{-\frac{\alpha}{2}}\right\Vert^2_{f} & =(\nu \Delta u_{\FE}, R_{f})_{\omega_f}+(\nu \nabla u_{\FE}, \nabla R_f)_{\omega_{f}}.
\end{split}
\end{equation*}
From the weak formulation (\ref{weak}) we have
 \begin{equation*}
\begin{split}
\left\Vert R_f \Phi_{\omega_{f}}^{-\frac{\alpha}{2}}\right\Vert^2_{f} & = (\nu \Delta u_{\FE}, R_f)_{\omega_f}-\left(\nu \nabla\left(u-u_{\FE}\right), \nabla R_f\right)_{\omega_f}+(f, R_f)_{\omega_f} \\ &\qquad +(\varrho, \nabla \cdot R_f)_{\omega_f} +
(\nabla \cdot u, R_f)_{\omega_f}\\
  &= (\nu \Delta u_{\FE}, R_f)_{\omega_f}-\left(\nu \nabla\left(u-u_{\FE}\right), \nabla R_f\right)_{\omega_f}+(f, R_f)_{\omega_f} \\&\qquad+ \left(\varrho_{\FE},\nabla\cdot R_f\right)_{\omega_f} +\left(\varrho-\varrho_{\FE}, \nabla \cdot R_f\right)_{\omega_f},
\end{split}
\end{equation*}
using again $\nabla \cdot u =0$. Then, performing integration by parts gives
\begin{align}
    \left\Vert R_f
    \Phi_{\omega_{f}}^{-\frac{\alpha}{2}}\right\Vert^2_{f} &=
    \left(I^K_{p_K}f + \nu \Delta u_{\FE}- \nabla \varrho_{\FE},
    R_f\right)_{\omega_f}-(\nu \nabla (u-u_{\FE}), \nabla R_e)_{K_e} 
    \notag
    \\&\qquad +(\varrho-\varrho_{\FE}, \nabla \cdot R_f)_{\omega_f} +
    \left(f- I^K_{p_K}f, R_f\right)_{\omega_f}
    \notag
    \\
    \label{eq_03}
    & \le  \left(\left\Vert I^K_{p_K}f 
   + \nu \Delta u_{\FE}-\nabla\varrho_{\FE}\right\Vert_{\omega_f}+
   \left\Vert f- I^K_{p_K}f\right\Vert_{\omega_f}\right)\left\Vert
   R_e\right\Vert_{\omega_f} 
   \notag
   \\& \qquad+
  \nu \left\Vert \nabla(u-u_{\FE})\right\Vert_{\omega_f} \left\Vert \nabla R_f \right\Vert_{\omega_f} 
  + \left\Vert \varrho-\varrho_{\FE} \right\Vert_{\omega_f} \left\Vert \nabla \cdot R_e\right\Vert_{\omega_f}.
\end{align}
We again distinguish two cases. First, if $\alpha>\frac{1}{2}$, we use
Lemma \ref{edge_inverseq} and obtain the following upper bounds for $\Vert R_f \Vert_{\omega_f}$ and $ \Vert \nabla R_f \Vert_{\omega_f}$ on face $f$: 
\begin{align*}
  \Vert \nabla R_f \Vert^2_{\omega_f} &\leq C\frac{\delta p_K^{(2(2-\alpha))} +\delta^{-1}}{h_K} \left \Vert  \left[ \nu\frac{\partial u_{\FE}}{\partial n}\right]\Phi_{\omega_f}^{\frac{\alpha}{2}} \right \Vert_{f}^{2},\\
 \Vert R_f \Vert^2_{\omega_f} &\leq C\delta h_K  \left \Vert  \left[ \nu\frac{\partial u_{\FE}}{\partial n} \right]\Phi_{\omega_{f}}^{\frac{\alpha}{2}} \right \Vert_{f}^{2}.
\end{align*}
Knowing that $\Vert \nabla \cdot R_f \Vert_{\omega_f} \leq \Vert \nabla R_f \Vert_{\omega_f}$, estimate (\ref{eq_03}) yields
\begin{align*}
  \left\Vert\left[\nu\frac{\partial u_{\FE}}{\partial n}\right]\Phi_{\omega_f}^{\frac\alpha2}\right\Vert_f  &\le C \bigg(\left(\delta h_K\right)^{\frac12}\bigg(\left\Vert I^K_{p_K}f+\nu \Delta u_{\FE}-\nabla \varrho_{\FE}\right\Vert_{\omega_f} \\&\qquad+\left\Vert f- I^K_{p_K}f\right\Vert_{\omega_f}\bigg)\\
  & \qquad + \sqrt{\frac{\delta p_K^{2(2-\alpha)}+\delta^{-1}}{h_K}}\bigg(\nu\left\Vert\nabla\left(u-u_{\FE}\right)\right\Vert_{\omega_f}\\&\qquad\qquad +\left\Vert\varrho-\varrho_{\FE}\right\Vert_{\omega_f}\bigg) \bigg),
\end{align*}
and it follows with Lemma \ref{lemma1} that
\begin{align*}
  \left\Vert\left[\nu\frac{\partial u_{\FE}}{\partial n}\right]\Phi_{\omega_f}^{\frac\alpha2}\right\Vert_f  &\le C\bigg\{\left(\delta h_K\right)^{\frac12}\bigg[\frac{p_K^2}{h_K}\bigg(\nu\left\Vert\nabla\left(u-u_{\FE}\right)\right\Vert_{\omega_f}\\\qquad&+\left\Vert\varrho-\varrho_{\FE}\right\Vert_{\omega_f}\bigg)+p_K^{\frac12}\left\Vert f-I^K_{p_K}f\right\Vert_{\omega_f}\bigg] \\\qquad&
  + \sqrt{\frac{\delta p_K^{2(2-\alpha)}+\delta^{-1}}{h_K}}\bigg(\nu\left\Vert\nabla\left(u-u_{\FE}\right)\right\Vert_{\omega_f} \\&\qquad\qquad +\left\Vert\varrho-\varrho_{\FE}\right\Vert_{\omega_f}\bigg)\bigg\}.
\end{align*}
By squaring both sides and summing over all edges $f\in\cal F(K)$, we get
\begin{equation}\label{equ_02}
\begin{split}
\eta_{\alpha;K;B}^2  \le C\delta & \bigg[p_K^3\left(\nu^2\left\Vert\nabla\left(u-u_{\FE}\right)\right\Vert_{\omega_K}^2+\left\Vert\varrho-
\varrho_{\FE}\right\Vert_{\omega_K}^2\right)+h_K^2\left\Vert f-I^K_{p_K}f\right\Vert_{\omega_K}^2\\
  &\quad+ \frac{p_K^{2(2-\alpha)}+\delta^{-2}}{p_K}\left(\nu^2\left\Vert\nabla\left(u-u_{\FE}\right)\right\Vert_{\omega_K}^2+\left\Vert\varrho-\varrho_{\FE}\right\Vert_{\omega_K}^2\right)\bigg].
\end{split}
\end{equation}
Setting $\delta:=p_K^{-2}$ gives the desired result.

For $0 \leq \alpha \leq \frac{1}{2}$, similar to 
the proof of Lemma \ref{lemma1}, we set $\beta:= \frac{1+\alpha}{2}$ and apply Lemma \ref{inverse_ineq} to 
get $\eta_{\alpha;K;B}\le p_K^{\beta-\alpha}\eta_{\beta;K;B}$. Then, using (\ref{equ_02}) gives
\begin{align*}
  \eta_{\alpha;K;B}^2  &\le C\delta   \bigg[p_K^{\frac{7-\alpha}2}\left(\nu^2\left\Vert\nabla\left(u-u_{\FE}\right)\right\Vert_{\omega_K}^2+\left\Vert\varrho-\varrho_{\FE}\right\Vert_{\omega_K}^2\right) \\&\qquad +\frac{h_K^2}{p_K^{\frac{\alpha-1}2}}\left\Vert f-I^K_{p_K}f\right\Vert_{\omega_K}^2\\
  &\qquad+\frac{p_K^{2(2-\alpha)}+\delta^{-2}}{p_K^{\frac{1+\alpha}2}}\left(\nu^2\left\Vert\nabla\left(u-u_{\FE}\right)\right\Vert_{\omega_K}^2+\left\Vert\varrho-\varrho_{\FE}\right\Vert_{\omega_K}^2\right) \bigg].
\end{align*}
Again setting $\delta:=p_K^{-2}$ concludes the proof.
\end{proof}\\
Lemmas \ref{lemma1} and \ref{lemma_02} combine to yield
the desired ``efficiency'' upper bound for the error estimator $ \eta
$ in terms of the quasi-local energy error.
\begin{mythm}[Efficiency] \label{reli_effici_thm} 
  Let $[u,\varrho]\in\calH$,
  $\left[u_{\FE},\varrho_{\FE}\right]\in\calV^p(\calT)$, and $\calT$
  as in Theorem~\ref{relibi_error_est}, and $\alpha\in[0,1]$ be
  arbitrary.
  Then, there exists some constant $C_{\text{eff}}> 0$ independent of mesh size vector $h$ and polynomial degree vector $p$ such that   
\begin{equation*}
\begin{split}
\eta_{\alpha ;K}^{2}\leq C_{\text{eff}} & \bigg(p_K^k\left(\nu^2\left\Vert\nabla\left(u-u_{\FE}\right)\right\Vert^{2}_{\omega_K}+\left\Vert\varrho-\varrho_{\FE}\right\Vert^2_{\omega_K}\right)\\& +\frac{h^2_K}{p_K^{1+\alpha}}\left\Vert I^K_{p_K}f-f\right\Vert_{\omega_K}^2\bigg)
\end{split}
\end{equation*}
for all $K\in\calT$, where $k:=\max\left\{2(1-\alpha),\frac{3-\alpha}2\right\}$.
By assuming that each cell is only part of a bounded number of cell
patches, the efficiency upper bound also holds for the entire
estimator $\eta_\alpha$.
\end{mythm}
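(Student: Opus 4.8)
The plan is to obtain the result by simply adding the two partial estimates already at our disposal. By the definition in \eqref{residual_boun} we have $\eta_{\alpha;K}^2=\eta_{\alpha;K;R}^2+\eta_{\alpha;K;B}^2$, so combining the bound on the cell-residual term from Lemma~\ref{lemma1} with the bound on the jump term from Lemma~\ref{lemma_02}, and abbreviating $\calE_K^2:=\nu^2\left\Vert\nabla\left(u-u_{\FE}\right)\right\Vert^2_{\omega_K}+\left\Vert\varrho-\varrho_{\FE}\right\Vert^2_{\omega_K}$, we immediately get
\begin{equation*}
\eta_{\alpha;K}^2 \le C\bigl(p_K^{2(1-\alpha)}+p_K^{\frac{3-\alpha}2}\bigr)\calE_K^2 + C\Bigl(\tfrac{h_K^{2+\frac\alpha2}}{p_K^{1+\alpha}}+\tfrac{h_K^2}{p_K^{\frac{3+\alpha}2}}\Bigr)\left\Vert I^K_{p_K}f-f\right\Vert^2_{\omega_K},
\end{equation*}
where we have used $\left\Vert\cdot\right\Vert_K\le\left\Vert\cdot\right\Vert_{\omega_K}$ to pass from the cell $K$ to its patch in the residual contribution.

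The first thing I would do is collapse the two powers of $p_K$ multiplying $\calE_K^2$. Since $p_K\ge1$, both $p_K^{2(1-\alpha)}$ and $p_K^{(3-\alpha)/2}$ are bounded by $p_K^{k}$ with $k=\max\{2(1-\alpha),\tfrac{3-\alpha}2\}$ (the two exponents coincide precisely at $\alpha=\tfrac13$, below which $2(1-\alpha)$ dominates and above which $\tfrac{3-\alpha}2$ does), so that term is $\le 2C\,p_K^k\calE_K^2$. Next I would absorb the two data-oscillation contributions into a single one: for the $p$-powers, $p_K^{-(3+\alpha)/2}\le p_K^{-(1+\alpha)}$ because $(3+\alpha)/2\ge1+\alpha$ exactly when $\alpha\le1$; for the $h$-power, $h_K^{2+\alpha/2}\le C h_K^2$ since the mesh cells have diameter bounded by a fixed constant (e.g.\ after normalizing $\diam\Omega$). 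Hence both oscillation terms are dominated by $C\tfrac{h_K^2}{p_K^{1+\alpha}}\left\Vert I^K_{p_K}f-f\right\Vert^2_{\omega_K}$, which yields the claimed local bound with $C_{\text{eff}}$ depending only on the constants of Lemmas~\ref{lemma1} and \ref{lemma_02} and on $\gamma_h,\gamma_p$, but not on $h$ or $p$.

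For the global statement I would sum the local bound over all $K\in\calT$, use $\eta_\alpha^2=\sum_{K\in\calT}\eta_{\alpha;K}^2$, and invoke the assumed finite-overlap property of the patches $\{\omega_K\}_{K\in\calT}$ (a standard consequence of $(\gamma_h,\gamma_p)$-regularity): each cell of $\calT$ belongs to at most a fixed number $N$ of patches $\omega_K$, so $\sum_{K\in\calT}\left\Vert\cdot\right\Vert^2_{\omega_K}\le N\left\Vert\cdot\right\Vert^2_\Omega$, and similarly for the oscillation term. This gives the global efficiency estimate with the constant enlarged by the factor $N$. I do not anticipate a genuine obstacle in this proof: the only points that require care are checking that $k$ is the correct maximum of the two $p$-exponents coming from the two lemmas and that the $h$- and $p$-power comparisons for the oscillation terms point in the right direction; everything else is routine bookkeeping.
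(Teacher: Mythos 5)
Your proposal is correct and is exactly the route the paper takes: the paper's (essentially one-line) proof is that Lemmas~\ref{lemma1} and \ref{lemma_02} combine via $\eta_{\alpha;K}^2=\eta_{\alpha;K;R}^2+\eta_{\alpha;K;B}^2$, with the maximum $k$ of the two $p$-exponents, absorption of the oscillation terms using $p_K\ge1$, $\alpha\le1$ and bounded $h_K$, and a finite-overlap argument for the global bound. You have merely written out the bookkeeping the paper leaves implicit, and your checks of the crossover at $\alpha=\tfrac13$ and of the exponent comparisons are accurate.
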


\section{$hp$-adaptive refinement} \label{sec:hp-Adaptive Ref}

To define a fully automatic $hp$-adaptive finite element algorithm, we
base our approach on the error estimator introduced in
Section~\ref{sec: estimator-Err Analysis}. It consists of the standard adaptive loop
\begin{equation}
\text{SOLVE} \longrightarrow \text{ESTIMATE} \longrightarrow \text{MARK} \longrightarrow \text{REFINE}.
\end{equation}
Of concern in this section is only the marking strategy for the third
step (given an estimate of the error as derived previously), for which
we follow the ideas of \cite{Burg2012, Buerg_Conv}. We then apply
either the usual bisection strategy of marked cells for mesh
refinement followed by ensuring that there is only one hanging node
per edge ($h$ refinement), or increase the polynomial degree (if $p$
refinement is favored).

The question in marking is whether to perform $h$- or
$p$-refinement. In both cases, one can also ask how exactly a cell is
to be subdivided, or by how much the polynomial degree should be increased. Unfortunately, the size of the estimated error
$\eta_K$ by itself is not enough to tell us which option is
to be preferred. Rather, we should estimate the error one would
``expect'' after each of these choices, and balance this information
against the cost of each choice.

%

\subsection{Convergence indicators} \label{sec: Conv-Indicator}
Let $j \in \{1, 2, \cdots, n\}$, where $n$ indicates the number of different $h$ and $p$ refinement patterns, and let $K \in \calT_N$ be a cell during the $N$-th cycle of refinement. Following \cite{Dorfler2007}, we define 
a ``convergence indicator'' $k_{K,j}\ge 0$
that estimates the error reduction on cell $K$ (relative to the
current estimated error $\eta_K$) if $K$ were refined by refinement pattern $j$.
For the Stokes problem, similar to \cite{Ainsworth1997}, we generate
this estimate by measuring the residual in a norm equivalent to the
norm on the dual of $\cal H(\omega_K)$. Let $e:= u - u_{\FE}$ and $E:= \varrho-\varrho_{\FE}$ such that $(e,E) \in \cal H$. Considering the residual 
of the Stokes problem on the local patch domain $\omega_K$, and
notation from $(\ref{bilin})$, then we have for all $(v,q)\in \cal H$:
\begin{equation*}
\int_{\omega_K} v f -\int_{\omega_K} \nabla v \cdot \nabla u_{\FE} +\int_{\omega_K} (\nabla \cdot v)\varrho_{\FE}+ \int_{\omega_K} q \nabla \cdot u_{\FE} = \mathcal{L}([v,q];[e, E])_{\omega_K}.
\end{equation*}
Integration by parts gives
\begin{equation*}
\int_{\omega_K} v\left (f+\nu\Delta u_{\FE}-\nabla\varrho_{\FE}\right)-\int_{\omega_K} q\left( \nabla\cdot u_{\FE} \right)= \mathcal{L}([v,q];[e, E])_{\omega_K}.
\end{equation*}
The pair $(w_u ,w_\varrho)\in \cal H$ is defined to be the Ritz projection of the residual, as follows:
\begin{equation}\label{ritz-rep}
(\nabla v ,\nabla(w_u))_{\omega_K}+(q , w_\varrho)_{\omega_K} = \mathcal{L}([v,q];[e, E])_{\omega_K},\hspace{20pt} \forall (v,q)\in \cal{H}.
\end{equation}
Existence and uniqueness of $(w_u, w_{\varrho})$ follows from the
continuity of the operators in the definition of the bilinear form in
$(\ref{bilin})$. The energy norm of the error can then be defined as
\begin{equation} \label{norm-equality}
\vertiii{(e, E)}_{\omega_K}^2 = \Vert  \nabla(w_u) \Vert_{\omega_K}^2+ \Vert w_\varrho \Vert_{\omega_K}^2. 
\end{equation}
Of course, this pair of functions can not be found
analytically -- we need to approximate it by solving
a discrete problem for $(w_u^j,w_\rho^j)$ using either a finer mesh, or a
finite element space with a higher polynomial degree -- i.e., one of
the choices $j$ for refinement.
For cell $K$ refined by pattern $j$, we combine the idea of the
convergence estimator in \cite{Dorfler2007} and the above discussion
on the Ritz representation of the residual \eqref{ritz-rep} and
define
\begin{equation}\label{conv-est}
k_{K,j}= \frac{1}{\eta_{K}(u_{\FE}, \varrho_{\FE})} \left(\left\Vert \nabla w_u^{j} \right\Vert^2_{\omega_K}+
\left\Vert w_{\varrho}^{j} \right\Vert^2_{\omega_K} \right)^{\frac{1}{2}}.
\end{equation}
The convergence estimator $k_{K,j}$ as defined in $(\ref{conv-est})$ indicates which
refinement pattern $j$ provides the biggest error
reduction on every cell. In order to choose the most efficient refinement
pattern, we need to balance this reduction against
a workload number $\varpi_{K,j} > 0$ that
indicates the work required to achieve the error reduction $k_{K,j}$ on cell $K$.
This workload number can be defined in a variety of ways; here, we
take it as the number of degrees of freedom in the local finite
element space, i.e.,
$\varpi_{K,j} = \text{dim}\;\calV ^{p}_{K, j}(\calT_N\rvert_{ \omega_K})$.

For each cell $K$, we then define $j_K$ to be that refinement strategy
that maximizes the expected (normalized) relative error reduction, i.e., $j_K =
\arg\max_{j \in \{1, 2, \cdots, n \}} \frac{k_{K,j}}{\varpi_{K,j}}$.

For the purpose of this work, we only consider two refinement
patterns, $j \in \{1, 2\}$, namely isotropic $h$-refinement, 
and $p$-refinement by increasing the polynomial degree by one, but the
strategy above is clearly applicable also to more general choices.

%

\subsection{Marking}\label{sec:mark-primal-hp}
We still have to decide \textit{which} cells should be refined using
the strategies $j_K$ defined above. To this end, we
seek that set 
$\mathcal{M}\subseteq \mathcal{T}$ of minimal cardinality so that
\begin{equation} \label{constraint}
\sum_{K\in \cal M} k^2_{K, j_K} \eta^2_{K} \geq \theta^2 \eta^2.
\end{equation}
We solve this problem approximately using a greedy strategy, i.e.,
using D\"{o}rfler marking. It is known, see \cite{Dorfler2007}, that
such an $\mathcal M$ exists if $\theta$ is chosen small enough.

\section{Numerical results} \label{sec: Numerical Results}
Our numerical verification of the algorithms proposed above are
implemented using the software library deal.II \cite{dealii,dealII85}.
In particular, we will keep track of the estimated error and
demonstrate that it decreases with the same asymptotic rate as the
actual error in the energy norm on a sequence of non-uniform,
$hp$-adaptively refined meshes. The \textit{effectivity index}
$I_\text{eff}$ then measures the quality of the estimator $\eta$:
 \begin{equation} \label{eff-Index}
 I_\text{eff} := \frac{\text{error estimator}}{\text{energy error}} = \frac{\eta\left(u_{\FE},\varrho_{\FE},f\right)}{\bigg(\left\Vert\nabla\left(u-u_{\FE}\right)\right\Vert^2_\Omega+\left\Vert\varrho-\varrho_{\FE}\right\Vert^2_\Omega\bigg)^{1/2}}.
 \end{equation}
 Ideally, one would want to have $I_\text{eff} =1$ as $h \rightarrow 0$; however, the
 equivalence of $\eta$ and the error in Section~\ref{sec: Error Est} has only
 been shown up to unknown constants, and consequently in practice
 we will be content if $C_1 \leqslant  I_\text{eff}  \leqslant C_2$ for some $C_1 , C_2 > 0$.
 
 \subsection{Example 1} \label{sec: Example-1}
    Let us consider a domain
    $
    \Omega= (-1,1)^2 \;\backslash\; ([0,1]\times  [-1,0])
    \subset \IR^2
    $ shaped like an ``L'', and choose the right hand side $f$ as well as inhomogeneous Dirichlet
    boundary conditions for $u$ so that the solution of the Stokes
    equations equals the smooth functions
    \begin{equation*}
    u=
    \left[
    \begin{matrix}
    -e^{x} (y \cos(y) +\sin(y))
    \\
    e^x y \sin(y)
    \end{matrix}
    \right],
    \qquad 
    \varrho= 2e^x \sin(y) - \frac 23 (1-e) (\cos(1)-1)).
    \end{equation*}       

In the following experiment, we start with a triangulation
$\mathcal{T}_0$ consisting of 12 uniform cells, and initially choose
$\calQ_3^2\times \calQ_2$ elements on all cells. We then start the
adaptive mesh iteration as discussed previously with $\theta=0.75$.

   \begin{figure}
        \centering
        \begin{tabular}{cc}
                \includegraphics[width=0.45\linewidth]{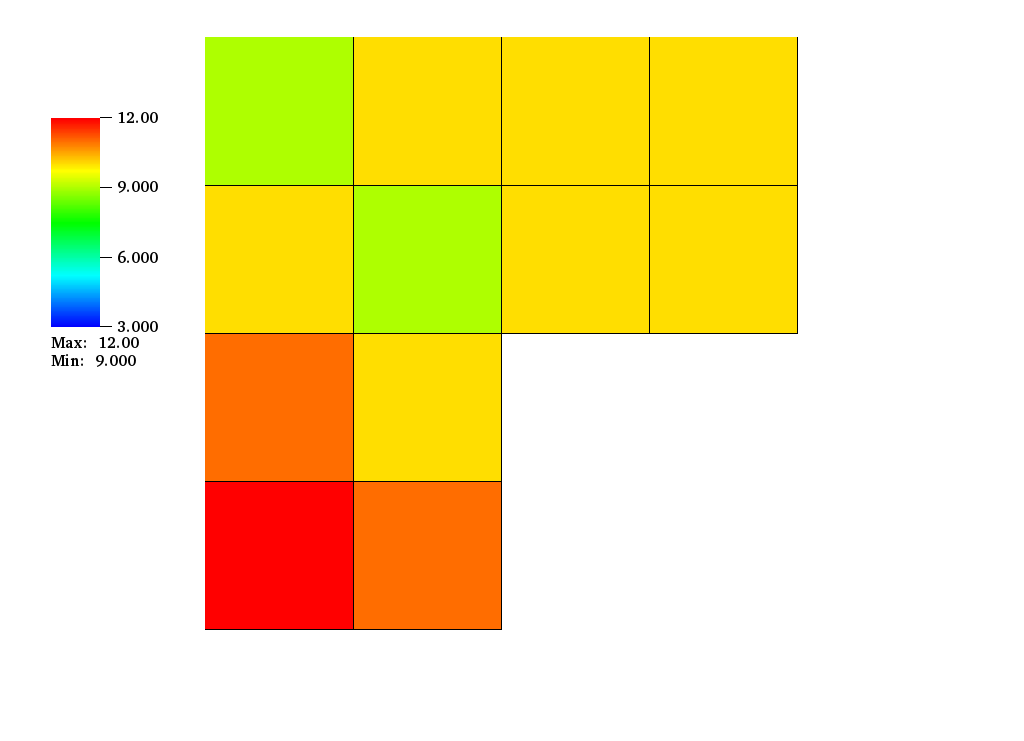} 
                &
                \includegraphics[width=0.45\linewidth]{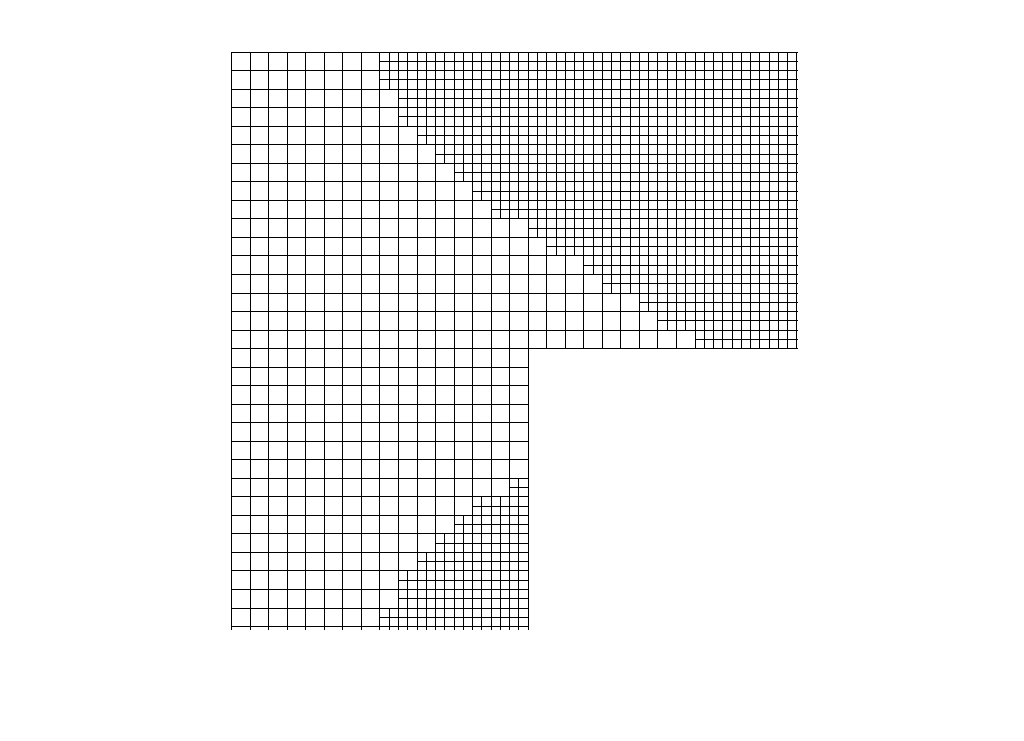} 
        \end{tabular}
        \caption{Example 1. Left: Final mesh after 11 $hp$-adaptive
        refinement steps, with color indicating the polynomial
        degrees. Right: Final mesh after 7 $h$-adaptive refinement
        steps. Both meshes have approximately the same number of
        degrees of freedom.}\label{h_and-hp-ref}
   \end{figure}


Fig.~\ref{h_and-hp-ref} shows meshes after a number of cycles if
$hp$-refinement is allowed, or if we only do
$h$-refinement. Unsurprisingly, and confirming expectations, given the smooth nature of the
exact solution, the $hp$-adaptive strategy consistently chooses $p$-refinement.
Fig.~\ref{error-errorest-eff-index} presents the
decay of the energy error and the a posteriori error
estimator as a function of number of degrees of freedom. The graph
both demonstrates the
exponential convergence rate, and also that the $hp$-error estimator is a sharp
upper bound for the energy error -- validating this as an efficient and
reliable a posteriori error estimator. We observe from the effectivity index
graph in Fig.~\ref{error-errorest-eff-index} that the $I_\text{eff}$
remains bounded in the range $5.4 \leqslant I_\text{eff} \leqslant
8.1$.

The figure's right panel also shows a comparison of errors for
$h$- and $hp$-adaptive refinement strategies. This
plot clearly shows the superiority of $hp$-AFEM over the $h$-AFEM.

 
 \begin{figure}
        \centering
        \includegraphics[width=0.32\linewidth]{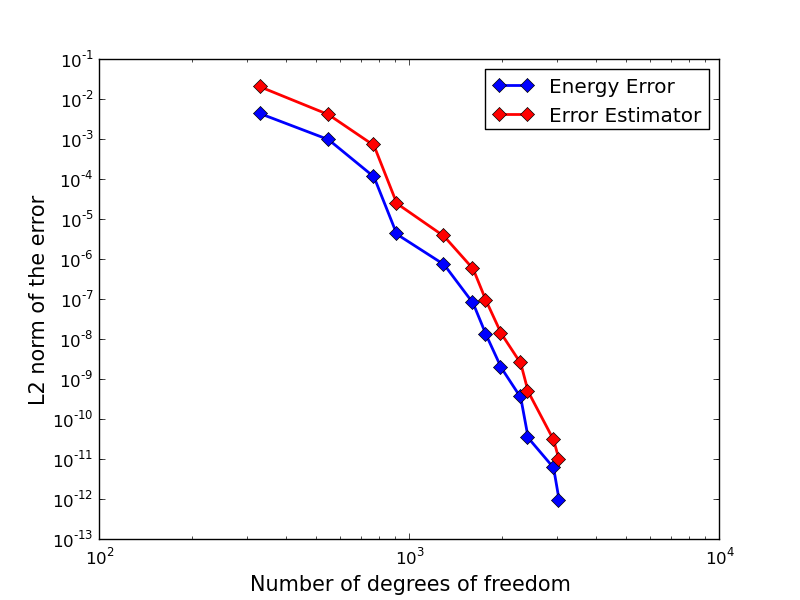} 
        \hfill
        \includegraphics[width=0.32\linewidth]{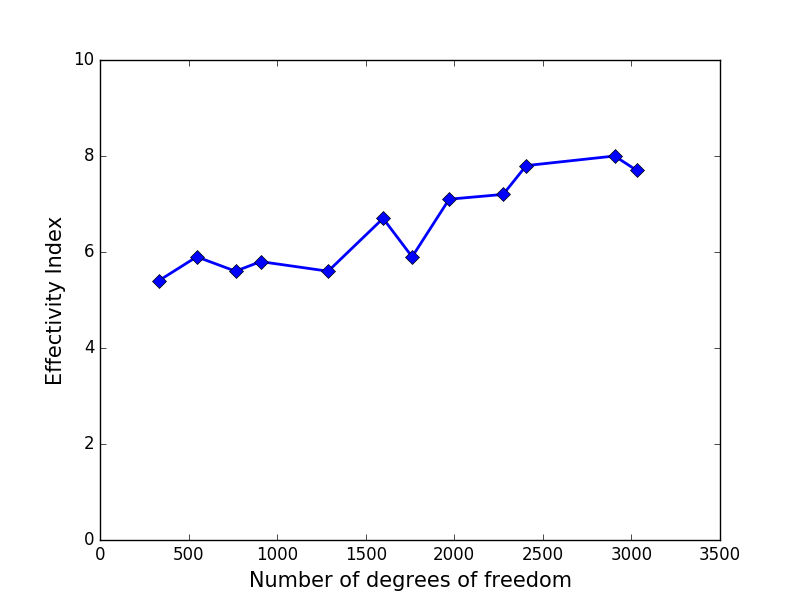}
        \hfill
        \includegraphics[width=0.32\linewidth]{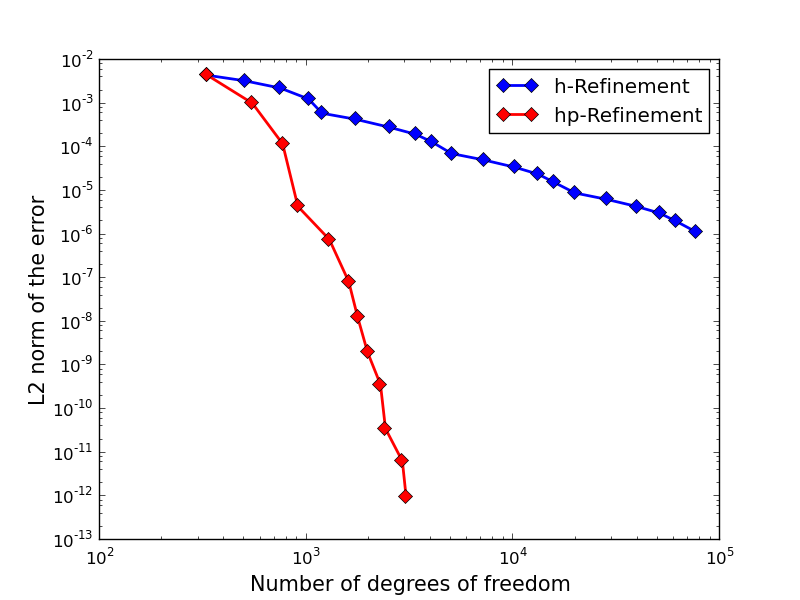}
        \caption{Example 1. Left: Comparison of the energy error and
          the error estimator for an $hp$-adaptive
          computation. Center: Effectivity indices $I_\text{eff}$ for
          the same computation. Right: A comparison of the errors for
          this computation with the errors obtained through pure
          $h$-refinement.}
        \label{error-errorest-eff-index}
 \end{figure}

\subsection{Example 2} \label{sec: Example-2}
On the same $L$-shaped domain, we now choose right hand side and
boundary values so that we reproduce the singular solution of
\cite{Dauge1989}, which reads in polar coordinates as
\begin{align*}
u(r, \varphi) &= r^{\alpha}
\left[
\begin{matrix}
\cos(\varphi)\psi^{'}(\varphi)+(1+\alpha)\sin(\varphi)\psi(\varphi)\\
\sin(\varphi)\psi^{'}(\varphi)-(1-\alpha)\cos(\varphi) \psi(\varphi)
\end{matrix}
\right ],
\\
\varrho(r, \varphi) &= -r^{\alpha-1} \frac{(1+\alpha)^2 \psi^{'}(\varphi)+ \psi^{'''}(\phi) }{1-\alpha},
\end{align*}
where
\begin{multline*}
  \psi(\varphi) =  \frac{\sin((1+\alpha)\varphi) \cos(\alpha \omega)}{1+\alpha}
  -\cos((1+\alpha)\varphi)
  \\
  -\frac{\sin((1-\alpha)\varphi) \cos(\alpha\omega)}{1-\alpha} +
  \cos((1-\alpha)\varphi), 
\end{multline*}
and $\omega= \frac{3 \pi}{2}$.
Here $\alpha$ is the smallest positive solution of
$
\sin(\alpha\omega) + \alpha \sin(\omega)=0
$
and is $\alpha\approx 0.54448373678246$.
We choose the same initial triangulation, but this time start with
$\calQ_2^2\times \calQ_1$ elements on all cells. We use $\theta=0.85$.  

Fig.~\ref{Ex-2-h_and-hp-ref} again shows $hp$- and $h$-adaptively refined
meshes generated by our error estimator. The corner singularity in the
solution is apparent.
A comparison of error and error estimator, efficiency indices, and a
comparison between $hp$- and $h$-adaptively refinement strategies is
shown in Fig.~\ref{error-errorest-eff-index-Ex2}. In particular, the
efficiency indices again remain bounded.


 \begin{figure}
        \centering
        \begin{tabular}{cc}
                \includegraphics[width=0.45\linewidth]{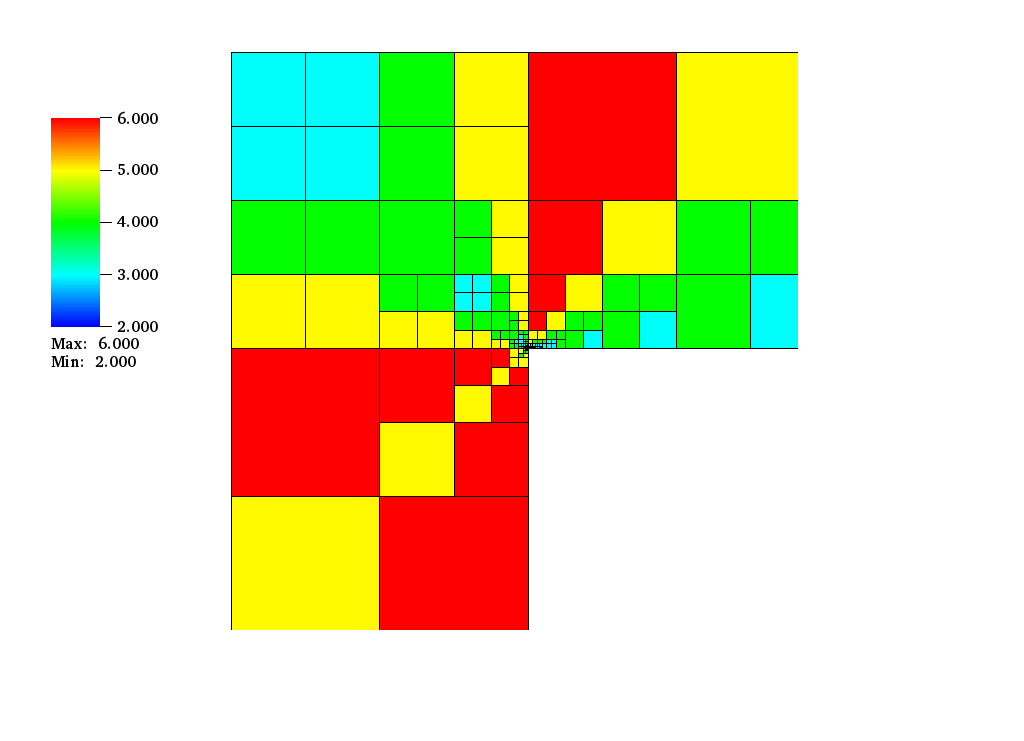} &
                \includegraphics[width=0.45\linewidth]{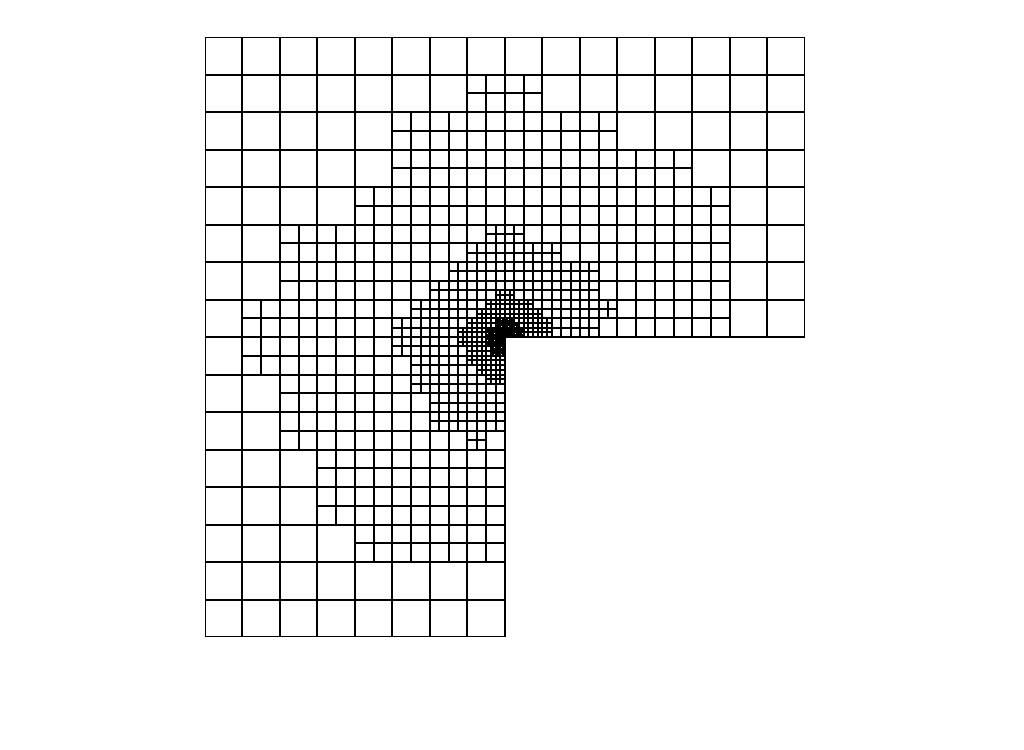} 
        \end{tabular}
        \caption{Example 2. Left: Final mesh after 10 $hp$-adaptive
        refinement steps, with color indicating the polynomial
        degrees. Right: Final mesh after 12 $h$-adaptive refinement
        steps. Both meshes have approximately the same number of
        degrees of freedom.}\label{Ex-2-h_and-hp-ref}
 \end{figure}

 \begin{figure}
        \centering
        \includegraphics[width=0.32\linewidth]{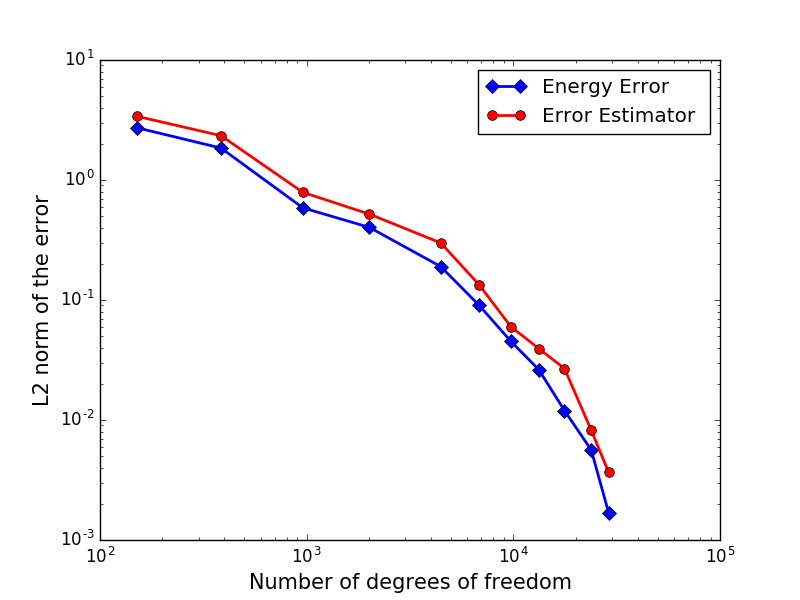} 
        \hfill
        \includegraphics[width=0.32\linewidth]{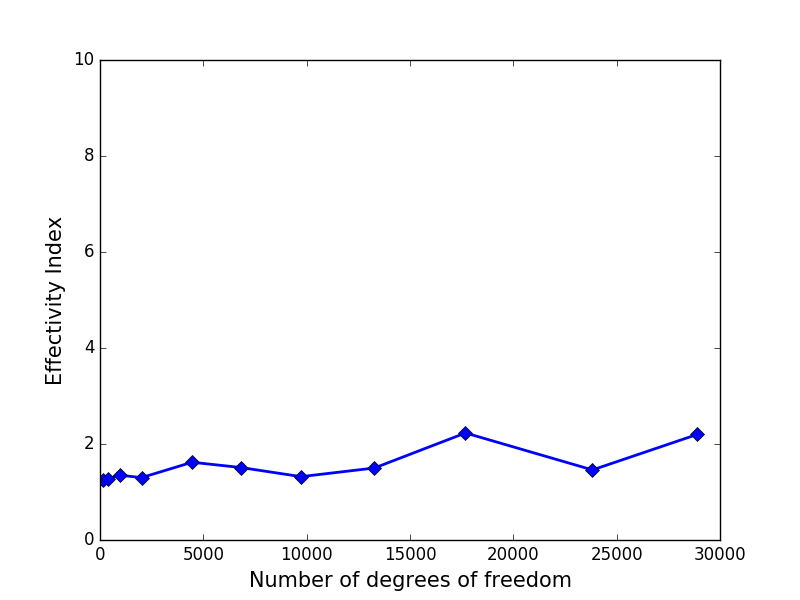}
        \hfill
        \includegraphics[width=0.32\linewidth]{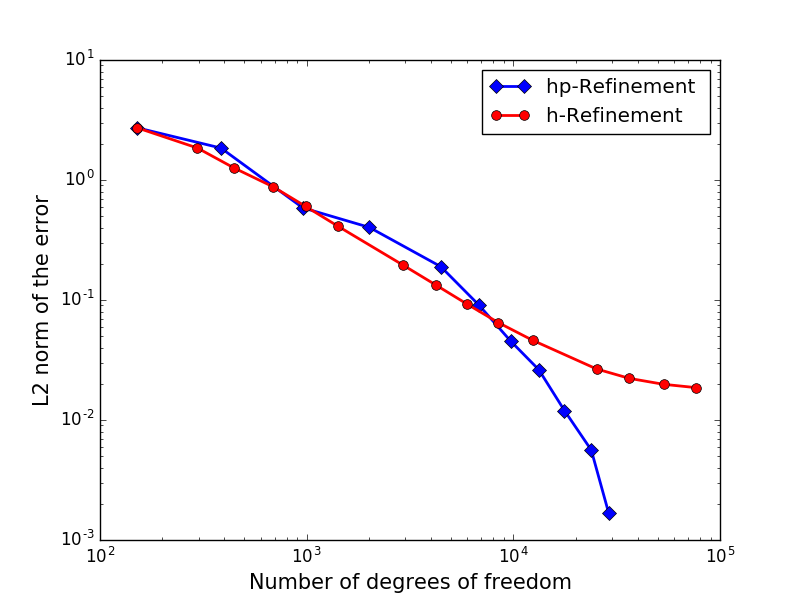}
        \caption{Example 2. Left: Comparison of the energy error and
          the error estimator for an $hp$-adaptive
          computation. Center: Effectivity indices $I_\text{eff}$ for
          the same computation. Right: A comparison of the errors for
          this computation with the errors obtained through pure
          $h$-refinement.}
        \label{error-errorest-eff-index-Ex2} 
 \end{figure}

\subsection{Example 3} \label{sec: Example-4}
As our last example, we consider a less contrived flow field of a
fluid moving through a pipe with a bend. The exact
solution is here not known, but the solution on a
very fine grid is shown in Fig.~\ref{bend-exact-solu-velocity}.

For this case, 
we prescribe homogeneous Dirichlet boundary condition on the sides of
the pipe; for the inlet and outlet, we prescribe parabolic velocity
boundary conditions.  The adaptive algorithms uses $\theta=0.75$ and starts with
28 equally sized cells. The meshes generated by $h$-adaptive refinement are
shown in Figure \ref{bend-exact-solu-velocity}.
\begin{figure}
	\centering
	\begin{tabular}{cc}
		\includegraphics[width=0.45\linewidth]{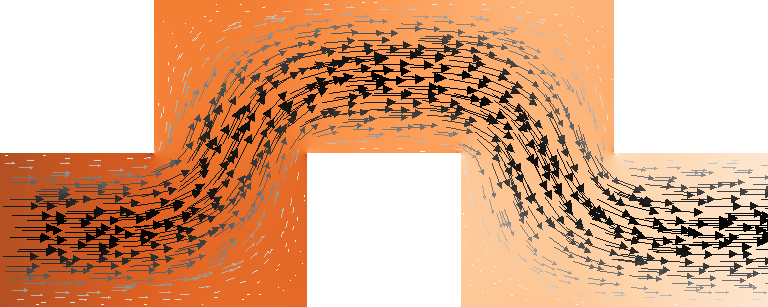} &
		\includegraphics[width=0.45\linewidth]{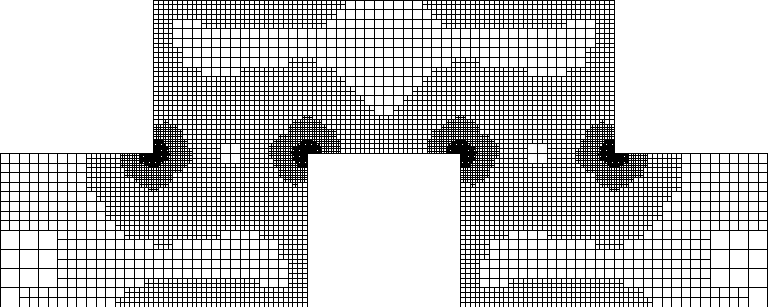}
	\end{tabular}
	\caption{Example 3. Left: Pressure field and velocity vectors
          on a fine mesh. Right: The mesh after 12 $h$-adaptive refinement steps.}
        \label{bend-exact-solu-velocity}
\end{figure}


A comparison between the $h$- and $hp$-adaptively generated meshes in
Fig.s~\ref{bend-exact-solu-velocity} and \ref{bend-hp-mesh} shows the
expected pattern of h-refinement where the solution is not smooth, and
$p$ refinement (if allowed) where the solution is smooth.
Because the exact solution is not known, it is not possible to compare
the exact errors for these two strategies; however, having established
the quality of our error estimator in the previous example, we can
compare how quickly the error estimates are reduced for both
strategies, with results shown in Fig.~\ref{bend-compare-h-hp} --
clearly showing the superiority of $hp$ refinement.

\begin{figure}
	\centering
	\begin{tabular}{c}
		\includegraphics[width=0.75\linewidth]{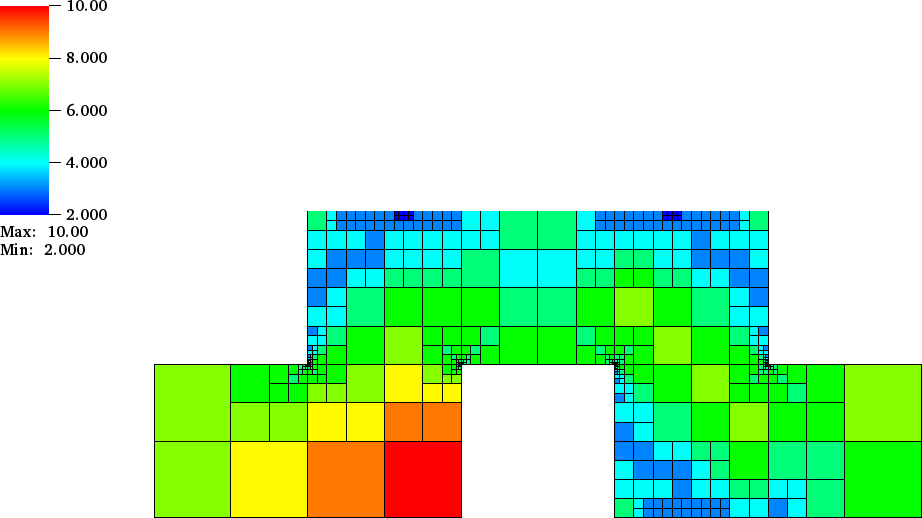} \\
	\end{tabular}
	\caption{Example 3. Mesh generated after 16 $hp$-adaptive steps, where the color bar indicates the polynomial degrees}\label{bend-hp-mesh}
\end{figure}  


\begin{figure}
	\centering
	\begin{tabular}{c}
		\includegraphics[width=0.65\linewidth]{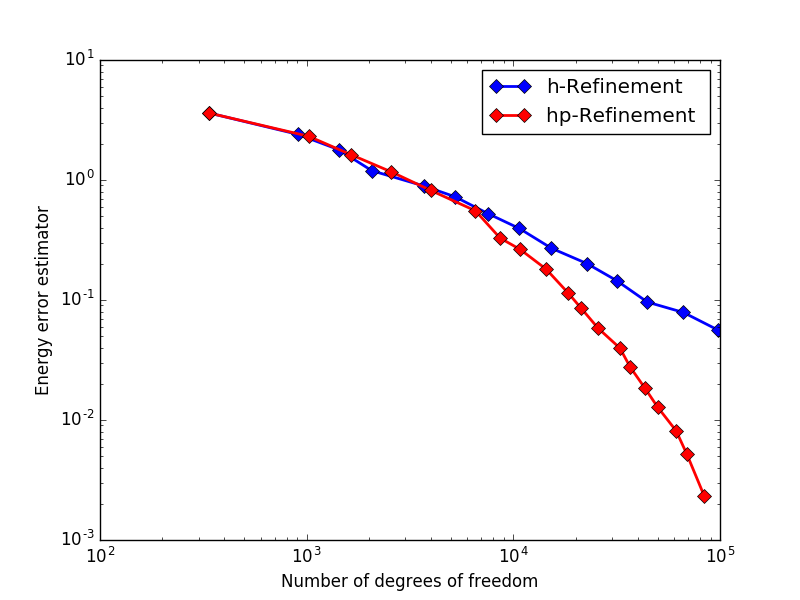} \\
	\end{tabular}
	\caption{Example 3. Comparison of the energy error estimator with $h$- and $hp$-adaptive mesh refinement.}\label{bend-compare-h-hp}
\end{figure}

\section{Conclusion} \label{sec: Conclusion}
In the spirit of previous work by Melenk on other equations (see
\cite{Melenk2005, Melenk2001}), we have here introduced a
residual-based a posteriori error estimator for the Stokes problem for
continuous, $hp$-adaptive finite element methods (AFEM). In particular, we have introduced a family
$\eta_{\alpha}, \alpha \in [0,1]$ of residual based error
estimators. We then proved upper and lower bounds for the
estimators applied to the Stokes problems. We were inspired by
D\"{o}rfler and Heuveline's work \cite{Dorfler2007} for
one-dimensional problems and later work on higher space
dimensions by B\"{u}rg \cite{Buerg_Conv}, and
introduced an $hp$-adaptive refinement algorithm for our
application. In order to decide which refinement gives the best
possible $hp$-refinement, in terms of the largest error reduction, we
solve local patch problems in parallel for each individual cell. The
numerical examples demonstrate the exponential convergence rate for
$hp$-AFEM in comparison with $h$-AFEM. They also show the
efficiency and reliability of the estimator with respect to the norm
of the exact error.

\section{Acknowledgements}
This material is based upon work supported by the U.S. Department of Energy,
Office of Science, under contract number DE-AC05-00OR22725. AG and
WB's work was supported by the National Science
Foundation under award OCI-1148116 as part of the Software Infrastructure for
Sustained Innovation (SI2) program. WB was also supported by the Computational
Infrastructure in Geodynamics initiative (CIG), through the National Science
Foundation under Awards No.~EAR-0949446 and EAR-1550901, administered by The University of California --
Davis.

\bibliographystyle{siam}
\bibliography{hp-Stokes}

\end{document}